\numberwithin{equation}{section}
\DeclarePairedDelimiter\floor{\lfloor}{\rfloor}
\newtheorem{theorem}{Theorem}[section]
\newtheorem{lemma}[theorem]{Lemma}
\newtheorem{corollary}[theorem]{Corollary}
\newtheorem{question}[theorem]{Question}
\theoremstyle{definition}
\newtheorem{definition}[theorem]{Definition}
\newtheorem{remark}[theorem]{Remark}
\newtheorem{example}[theorem]{Example}
\newcommand{\PIVAL}{3.14159265358979323846264338} 
\newcounter{i} 
\newcommand{\Circulant}[2] { 
	\begin{tikzpicture}
	\setcounter{i}{0}
	\whiledo{\value{i}<#1}{ 
		\FPmul\tempA{2}{\thei} 
		\FPdiv\tempB{\PIVAL}{#1} 
		\FPmul\tempC{\tempA}{\tempB} 
		\FPcos\varX{\tempC} 
		\FPsin\varY{\tempC} 
		\stepcounter{i} 
		\FPround\varX{\varX}{3}
		\FPround\varY{\varY}{3}
		\node (\thei) at (\varX,\varY)[place]{ }; 
		\foreach \x in {#2} { 
			\pgfmathparse{mod(\x+\thei,#1)} 
			\let\tempB\pgfmathresult
			\pgfmathparse{mod(\thei-\x,#1)} 
			\let\tempA\pgfmathresult
			\ifthenelse{\lengthtest{\tempA pt < 1 pt}}{\FPadd\tempA{\tempA}{#1}}{}
			\ifthenelse{\lengthtest{\tempB pt < 1 pt}}{\FPadd\tempB{\tempB}{#1}}{}
			\ifthenelse{\lengthtest{\tempA pt > \thei pt}}{}{\ifthenelse{\thei = \tempA}{}{\draw [] (\thei) to (\tempA)}};
			\ifthenelse{\lengthtest{\tempB pt > \thei pt}}{}{\ifthenelse{\thei = \tempB}{}{\draw [] (\thei) to (\tempB)}};
		}
	}
	\end{tikzpicture}
}
\begin{document}

\tikzstyle{place}=[draw,circle,minimum size=0.5mm,inner sep=1pt,outer sep=-1.1pt,fill=black]


\title{Independence complexes of well-covered circulant graphs}
\thanks{Last updated: \today}
\thanks{Research of the last two authors supported in part by NSERC Discovery Grants.
Research of the first author supported by an NSERC USRA}

\author{Jonathan Earl}
\address{Department of Mathematics\\
Redeemer University College, Ancaster, ON, L9K 1J4, Canada}
\email{jearl@redeemer.ca}

\author{Kevin N. Vander Meulen}
\address{Department of Mathematics\\
Redeemer University College, Ancaster, ON, L9K 1J4, Canada}
\email{kvanderm@redeemer.ca}

\author{Adam Van Tuyl}
\address{Department of Mathematics \& Statistics\\
McMaster University \\
Hamilton, ON, L8S 4L8, Canada}
\email{vantuyl@math.mcmaster.ca}

\keywords{circulant graph, well-covered graph, independence complex,
vertex decomposable, shellable, Cohen-Macaulay, Buchsbaum}
\subjclass[2010]{05C75, 05E45, 13F55}

\begin{abstract}
We study the independence complexes of families of well-covered 
circulant graphs discovered by  Boros-Gurvich-Milani\v{c}, 
Brown-Hoshino,  and Moussi.  Because these graphs are well-covered, 
their independence complexes are pure simplicial complexes.  We determine 
when these pure complexes have extra combinatorial (e.g. vertex decomposable, 
shellable) or topological (e.g. Cohen-Macaulay, Buchsbaum) structure. 
We also provide a table of all well-covered circulant graphs on 16 or less vertices, 
and for each such graph, determine if it is vertex decomposable, shellable, 
Cohen-Macaulay, and/or Buchsbaum.  A highlight of this search
is an example of a graph whose independence complex is shellable but
not vertex decomposable.
\end{abstract}

\maketitle

\section{Introduction}\label{sec:intro}

Let $G$ be a finite simple graph with vertex set $V$ and edge set $E$.  We say that 
a subset $W \subseteq V$ is a {\it vertex cover} of $G$ if $e \cap W \neq 
\emptyset$ for all edges $e \in E$.  The complement of a vertex cover is an
{\it independent set}.  A graph $G$ is called {\it well-covered} if
every minimal vertex cover (with respect to the partial order of inclusion) has the 
same cardinality.  Via the duality between vertex covers and independent
sets, being well-covered is equivalent to the property that every maximal independent
set has the same cardinality.   
Plummer's survey \cite{P} provides a nice entry point to learn more about
well-covered graphs.

Recently, there has been interest in identifying circulant graphs that are 
well-covered (see, e.g. \cite{BGM,BHmusic,Brown11,HPhD,M} and some motivation there-within). 
Recall that a {\it circulant graph} is defined as follows.  Let $n 
\geq 1$ be an integer, and let $S \subseteq \{1,2,\ldots,\floor{\frac{n}{2}}\}$. The circulant graph 
$C_n(S)$ is the graph on the vertex set $V = \{0, 1,\ldots,n-1\}$, such that 
$\{a, b\}$ is an edge of $C_n(S)$ if and only if $|a-b| \in S$ or $n-|a-b| \in S$. Circulant
graphs include the family of cycles ($C_n = C_n(\{1\})$) and the family of complete
graphs
($K_n = C_n(\{1,2,\ldots,\floor{\frac{n}{2}}\})$).  
For ease of notation, we will
write $C_n(a_1,\ldots,a_t)$ instead of $C_n(\{a_1,\ldots,a_t\})$.  The circulant
graph $C_{13}(1,3,5)$ can be found in Figure \ref{c13}.  
\begin{figure}[h]
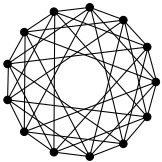

\[\Circulant{13}{1,3,5}\]
\caption{The circulant graph $C_{13}(1,3,5)$.}
\label{c13}
\end{figure}

If we consider the {\it independence complex of a graph $G$}, i.e., the simplicial
complex 
\[{\rm Ind}(G) = \{W \subseteq V ~|~ \mbox{$W$ is an independent set of $G$}\},\] 
then the well-coveredness of $G$ is equivalent to the property that ${\rm Ind}(G)$ is
a pure simplicial complex, that is, all of its maximal faces have the same size.  
A pure simplicial complex can have additional combinatorial or topological structure; in fact,
as summarized in Theorem \ref{facts},
we have the following hierarchy (definitions will be postponed to the next section)
of pure simplicial complexes:
\[\mbox{vertex decomposable} \Rightarrow \mbox{shellable} \Rightarrow
\mbox{Cohen-Macaulay} \Rightarrow \mbox{Buchsbaum}. \]
Thus, given a well-covered circulant graph $C_n(S)$, we can ask what additional
structure ${\rm Ind}(C_n(S))$ entertains.  This question is the main focus of this paper.
In particular, we look at specific families of well-covered circulant graphs
found in \cite{BGM,Brown11,M} and examine the structure of the corresponding independence complex.
This paper can be seen as a direct sequel to \cite{VVW} 
which considered the same question for some families of well-covered circulants found in \cite{Brown11}.  
In addition, our work complements the investigation of the topology
of independence complex of circulants (e.g., see \cite{A,K}). 

We now provide an overview of the paper. 
Section 2 provides the relevant background material.
In Section 3 we consider the well-covered circulants of the form 
$C_n(d+1,d+2,\ldots,\floor{\frac{n}{2}})$ which were characterized 
by Brown and Hoshino \cite{Brown11}.  Theorem \ref{thmcomplements} shows
that the independence complexes of these graphs are always Buchsbaum.  In addition, we classify 
when these complexes are vertex decomposable, solving an open problem
of \cite{VVW}.  We also
include a discussion on the $h$-vectors of Buchsbaum complexes.  In Section 4
we consider circulants of the form $C_n(S)$ where $|S| = \floor{\frac{n}{2}}-1$.
The well-covered circulants of this form where characterized 
by Moussi \cite{M}.  Theorem \ref{theorem62} 
refines this result by describing 
the additional structure of ${\rm Ind}(C_n(S))$.  
In Section 5, we examine one-paired circulants, a family of graphs
introduced by Boros, Gurvich, and Milani\v{c} \cite{BGM} as an example
of CIS (Cliques Intersecting Stable sets) circulants.  We give a new
structural result for one-paired circulants (Theorem \ref{structure2}), and use 
this result to determine some properties of its associated independence complex, 
and to provide a new proof that these circulant graphs are CIS.  

In the final two sections we collect together a number of observations
and questions based upon a computer search.
In particular,
we include a table (Table \ref{table}) of all well-covered circulants $C_n(S)$ with $n \leq 16$, and 
for each circulant, determine whether it is vertex decomposable, 
shellable, etc.  As an interesting by-product of this search, we have found that 
$C_{16}(1,4,8)$ is the smallest example of a circulant graph
that has a shellable independence complex but is not vertex decomposable.
To the best of our knowledge, $C_{16}(1,4,8)$ is the first example of a
graph with this property (see Remark \ref{vdnotshellable}).

Although we will not employ this view point, there is an 
algebraic interpretation of our work.  Associated
to a graph $G$ on $n$ vertices $V = \{x_1,\ldots,x_n\}$ 
is a quadratic square-free monomial ideal $I(G) = \langle x_ix_j ~|~ \{x_i,x_j\} \in E \rangle$ 
in $R =k[x_1,\ldots,x_n]$.
The ideal $I(G)$, commonly called the {\it edge ideal} of $G$, is the 
ideal associated to ${\rm Ind}(G)$ via the Stanley-Reisner correspondence.  
The property that $G$ is well-covered is equivalent to the property that
$I(G)$ is an unmixed ideal.  Moreover, if ${\rm Ind}(G)$ is Cohen-Macaulay 
or Buchsbaum, the ring $R/I(G)$
also has this property.  By identifying Cohen-Macaulay and/or Buchsbaum 
independence complexes, we are contributing to an ongoing programme in 
combinatorial commutative algebra to identify graphs $G$ such that
$R/I(G)$ is Cohen-Macaulay or Buchsbaum 
(e.g., see \cite{CN,HHbook,HH,HHZ,HHKO,MMCRTY,MV,V}).

Finally, we would like to add a small erratum to \cite{VVW}.  
On page 1902, the $f$-vector and $h$-vector
on line 4 should be $(1,11,33,22)$, respectively, $(1,8,14,-1)$;  
the conclusions are still the same.  In Example 6.3, it should read
$G[H] = C_{10}(1,2,3,5)$ and $H[G] = C_{10}(1,4,5)$, that is, $G[H]$ and $H[G]$ should be reversed.
In addition,
unknown to us at the time, Theorem 4.65 of Hoshino's Ph.D. thesis \cite{HPhD}  
contained a proof for the equivalence of Theorem 3.4 $(ii)$ and $(iv)$ of \cite{VVW}.  Moreover, 
we note that Theorem 3.7 of \cite{VVW} answered 
Conjecture 4.68 of \cite{HPhD}.  

\noindent
{\bf Acknowledgements.}
We used the \LaTeX\ code of \cite{eastman} to 
draw circulant graphs.  We also thank
Russ Woodroofe for answering some of our questions.


\section{Background Definitions and Results}\label{sec:background} 

In this section we review the relevant definitions and results.  

A {\it simplicial complex} on a vertex set $V = \{x_1,\ldots,x_n\}$ is a set $\Delta$ whose elements
are subsets of $V$ such that $(a)$ if $F \in \Delta$ and $G \subseteq F$, then $G \in \Delta$, and
$(b)$ for each $i=1,\ldots,n$, $\{x_i\} \in \Delta$.   Note that the set $\emptyset \in \Delta$.
The independence complex
${\rm Ind}(G)$, as defined in the introduction, is a simplicial complex.

An element $F \in \Delta$ is called a {\it face}.  The maximal elements of $\Delta$, with respect to
inclusion, are called the {\it facets} of $\Delta$.  If $\{F_1,\ldots,F_t\}$ is a complete
list of the facets of $\Delta$, we will sometimes write $\Delta = \langle F_1,\ldots,F_t\rangle$.
The {\it dimension} of a face $F \in \Delta$,
denoted $\dim F$, is given by $\dim F = |F|-1$, where we make the convention
that $\dim \emptyset = -1$.  
The {\it dimension of $\Delta$}, denoted
$\dim \Delta$, is defined to be $\dim \Delta = \max_{F \in \Delta} \{\dim F\}$.  A simplicial complex is {\it pure}
if all of its facets have the same dimension.  Note that if $\alpha(G)$ denotes
the cardinality of the largest independent set, then $\dim {\rm Ind}(G) = \alpha(G)-1$.

The  $f$-vector of $\Delta$ records the number of faces of dimension $i$ of $\Delta$.  Precisely,
if $\dim \Delta = d$, then the {\it $f$-vector of $\Delta$} is a $(d+2)$-tuple $f(\Delta) = (f_{-1},f_0,f_1,\ldots,f_d)$
where $f_i$ is the number of faces of dimension $i$.  It follows that $f_{-1} = 1$ and $f_0 = n$.  
The {\it $h$-vector} of a $d$-dimensional simplicial complex
$\Delta$ is a $(d+2)$-tuple $h(\Delta) = (h_0,h_1,\ldots,h_{d+1})$ where
\begin{equation}\label{hvector}
h_i = \sum_{j=0}^i(-1)^{i-j}\binom{d+1-j}{i-j}f_{j-1}.
\end{equation}

Given any face $F \in \Delta$, we define the {\it link} of $F$ in $\Delta$ to be
\[{\rm link}_\Delta(F) = \{G \in \Delta ~|~ F \cap G = \emptyset ~\mbox{and}~ F \cup G \in \Delta\}.\]
The {\it deletion} of a face $F$ in $\Delta$ is the set 
\[{\rm del}_{\Delta}(F) = \{G \in \Delta ~|~ F \not\subseteq G \}.\]
Both ${\rm link}_\Delta(F)$ and ${\rm del}_\Delta(F)$ are simplicial complexes.  When $F = \{x_i\}$,
then we simply write ${\rm link}_\Delta(x_i)$ or ${\rm del}_\Delta(x_i)$.

As promised in the introduction, we now define the relevant pure simplicial complexes.  

\begin{definition} Let $\Delta = \langle F_1,\ldots,F_t\rangle$ 
be a pure simplicial complex on $V = \{x_1,\ldots,x_n\}$.  
\begin{enumerate}
\item[$(i)$] (see \cite{PB}) $\Delta$ is {\it vertex decomposable}
if $(a)$ $\Delta$ is a simplex, i.e., $\{x_1,\ldots,x_n\}$ is the unique maximal facet, or $(b)$ there
exists a vertex $x$ such that ${\rm link}_{\Delta}(x)$ and ${\rm del}_\Delta(x)$ are vertex decomposable.
\item [$(ii)$]$\Delta$ is {\it shellable} if there exists an ordering
$F_1 < F_2 < \cdots < F_t$ such that for all $1 \leq j < i \leq t$, there is some 
$x \in F_i \setminus F_j$ and some $k \in \{1,\ldots,i-1\}$ such that $\{x\} = F_i \setminus F_k$.
\item[$(iii)$] 
$\Delta$ is {\it Cohen-Macaulay}
if for all $F \in \Delta$, $\tilde{H}_i({\rm link}_\Delta(F),k) = 0$ for all $i < \dim {\rm link}_\Delta(F)$.  
(Here $\tilde{H}_i(-,k)$ denotes the $i$-th reduced simplicial homology group.)
\item[$(iv)$] $\Delta$ is {\it Buchsbaum} if ${\rm link}_{\Delta}(x)$ is Cohen-Macaulay
for all $x \in V$.
\end{enumerate}
\end{definition}

\begin{remark}
 Note that for the definition of Cohen-Macaulay, we use the reduced 
simplicial homology definition due to Reisner,
sometimes known as Reisner's Criterion.  For an introduction
to reduced simplicial homology, see \cite[Section 5.2]{Vbook}
and  for Reisner's Criterion, see \cite[Theorem 5.3.5]{Vbook}.
If $I_{\Delta}$
is the Stanley-Reisner ideal (see, e.g. \cite{Vbook}) associated to $\Delta$, 
then one can show that the definitions of Cohen-Macaulay and Buchsbaum presented here
are equivalent to the algebraic requirement that $R/I_{\Delta}$ be either 
Cohen-Macaulay or Buchsbaum,
where $R = k[x_1,\ldots,x_n]$. 
\end{remark}

We summarize a number of well-known results about the above simplicial
complexes.

\begin{theorem}\label{facts}
Let $\Delta$ be a pure simplicial complex on $V = \{x_1,\ldots,x_n\}$.  
\begin{enumerate}
\item[$(i)$] The following implications hold:
\[\mbox{vertex decomposable} \Rightarrow \mbox{shellable} \Rightarrow \mbox{Cohen-Macaulay}
\Rightarrow \mbox{Buchsbaum}.\]
\item[$(ii)$] If $\dim \Delta = 0$, then $\Delta$ is vertex decomposable 
(and thus, shellable, Cohen-Macaulay, and Buchsbaum).
\item[$(iii)$] If $\dim \Delta =1$, then $\Delta$ is vertex decomposable/shellable/Cohen-Macaulay 
if and only if $\Delta$ is connected.  If $\Delta$ is not connected, then $\Delta$ is 
Buchsbaum but not Cohen-Macaulay.
\item[$(iv)$] If $\dim \Delta \geq 2$ and $\Delta$ is Cohen-Macaulay, then $\Delta$ is connected.
\item[$(v)$] If $\Delta$ is Cohen-Macaulay, then $h(\Delta)$ has no negative entries.
\end{enumerate}
\end{theorem}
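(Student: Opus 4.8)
The plan is to handle the five parts largely independently, proving the genuinely combinatorial statements directly and citing the standard topology and commutative-algebra inputs where they are needed (principally \cite{PB} for vertex decomposability and \cite{Vbook} for Reisner's criterion and Hilbert-function facts). For part $(i)$ I would establish the three implications in order. ``Vertex decomposable $\Rightarrow$ shellable'': induct on $|V|$; if $\Delta$ is a simplex it is trivially shellable, and otherwise pick a vertex $x$ for which ${\rm link}_\Delta(x)$ and ${\rm del}_\Delta(x)$ are vertex decomposable, hence, by induction, shellable with shelling orders $F_1<\cdots<F_p$ of ${\rm del}_\Delta(x)$ and $G_1<\cdots<G_q$ of ${\rm link}_\Delta(x)$; I would then show $F_1<\cdots<F_p<G_1\cup\{x\}<\cdots<G_q\cup\{x\}$ is a shelling of $\Delta$ by verifying the facet-ordering condition of the Definition for every pair, which is exactly the Provan--Billera argument. ``Shellable $\Rightarrow$ Cohen-Macaulay'': use that the link of a face of a shellable complex is again shellable, and that a $d$-dimensional shellable complex is homotopy equivalent to a wedge of $d$-spheres, so that its reduced homology is concentrated in degree $d$; applying this to every link is precisely Reisner's criterion. ``Cohen-Macaulay $\Rightarrow$ Buchsbaum'': observe that ${\rm link}_{{\rm link}_\Delta(F)}(G) = {\rm link}_\Delta(F\cup G)$, so the homological vanishing descends to all links, and in particular ${\rm link}_\Delta(x)$ is Cohen-Macaulay for every vertex $x$, which is the definition of Buchsbaum.

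The remaining parts are short. For $(ii)$ I would induct on $n$, with $n=1$ giving a simplex; for $n>1$ a $0$-dimensional pure complex $\Delta=\langle\{x_1\},\dots,\{x_n\}\rangle$ has ${\rm link}_\Delta(x_n)=\{\emptyset\}$, a simplex, and ${\rm del}_\Delta(x_n)=\langle\{x_1\},\dots,\{x_{n-1}\}\rangle$, vertex decomposable by induction, so $\Delta$ is vertex decomposable and the rest follows from $(i)$. For $(iv)$, Reisner's criterion applied to the face $\emptyset$ gives $\tilde H_i(\Delta,k)=0$ for all $i<\dim\Delta$, and $\dim\Delta\ge 2$ forces $\tilde H_0(\Delta,k)=0$, i.e.\ $\Delta$ is connected. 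For $(iii)$ I would identify a pure $1$-dimensional complex with the graph $G$ formed by its vertices and edges (which has no isolated vertices): if $G$ is disconnected then $\tilde H_0(\Delta,k)\ne 0$, so $\Delta$ is not Cohen-Macaulay and hence neither shellable nor vertex decomposable, yet each vertex link is $0$-dimensional and so Cohen-Macaulay by $(ii)$, making $\Delta$ Buchsbaum; if $G$ is connected then either $G=K_2$ is a simplex, or $|V(G)|\ge 3$ and I would take $x$ to be a leaf of a spanning tree, so that $G-x$ is connected with no isolated vertices and ${\rm del}_\Delta(x)$ is the $1$-complex of $G-x$, vertex decomposable by induction, while ${\rm link}_\Delta(x)$ is the $0$-dimensional complex on the neighbours of $x$, vertex decomposable by $(ii)$; hence $\Delta$ is vertex decomposable, and shellable and Cohen-Macaulay by $(i)$. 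For $(v)$ I would pass to the algebraic side: $\Delta$ Cohen-Macaulay means $R/I_\Delta$ is a Cohen-Macaulay ring of Krull dimension $d+1$, and reducing modulo a linear system of parameters (after a harmless extension of $k$) yields an Artinian graded quotient ring whose Hilbert function equals $(h_0,\dots,h_{d+1})$ and is therefore non-negative.

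I expect the only real obstacle to be the gluing step in the first implication of $(i)$: one must check that in the concatenated order each new facet $G_\ell\cup\{x\}$ meets the union of the earlier facets in a subcomplex that is pure of codimension one, which requires using both that $G_\ell$ was inserted in shelling order inside ${\rm link}_\Delta(x)$ and how the facets $F_1,\dots,F_p$ of ${\rm del}_\Delta(x)$ interact with the neighbours of $x$. Since this is classical, I would either carry out the verification carefully or, given that the theorem is stated merely as a summary of known facts, simply cite \cite{PB} and \cite{Vbook}; everything else reduces to a one-line application of Reisner's criterion or a short induction.
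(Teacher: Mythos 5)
Your proposal is correct and follows essentially the same route as the paper: the paper simply cites \cite{PB} for parts $(i)$--$(iii)$ and \cite{Vbook} for the remaining implications, Reisner's criterion argument in $(iv)$, and the $h$-vector statement in $(v)$, while you sketch the standard proofs behind those same citations (the Provan--Billera concatenated shelling, links of shellable complexes being shellable, $\operatorname{link}_{\operatorname{link}_\Delta(F)}(G)=\operatorname{link}_\Delta(F\cup G)$, and the Artinian reduction for $(v)$) and fall back on the same references where appropriate. The one subtlety you flag in the gluing step of $(i)$ --- that $\operatorname{del}_\Delta(x)$ must be pure of the same dimension as $\Delta$ so that each link facet $G_\ell$ extends to an earlier facet of the deletion --- is exactly the point handled in \cite{PB}, so your fallback to that citation is the right call and there is no gap.
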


\begin{proof}
$(i)$ Vertex decomposability implies shellability by \cite[Corollary 2.9]{PB};
shellability implies Cohen-Macaulay by \cite[Theorem 5.3.18]{Vbook},
and Cohen-Macaulay implies Buchsbaum since ${\rm link}_{\Delta}(F)$ is Cohen-Macaulay
for all faces $F \in \Delta$ when $\Delta$ is Cohen-Macaulay by \cite[Proposition 5.3.8]{Vbook}.
$(ii)$ is 
\cite[Proposition 3.1.1]{PB}, and $(iii)$ is \cite[Theorem 3.1.2]{PB}.
Because ${\rm link}_\Delta(\emptyset)  = \Delta$,  $\tilde{H}_0(\Delta,k) = 0$ since
$\dim \Delta \geq 2$ and $\Delta$ is Cohen-Macaulay.  By \cite[Proposition 5.2.3]{Vbook},
 $\tilde{H}_0(\Delta,k)+1$ is the number of connected components of $\Delta$, i.e.,
$\Delta$ is connected.
For $(v)$ see \cite[Theorem 5.4.8]{Vbook}.
\end{proof}

Because we are interested in ${\rm Ind}(G)$ for graphs $G$, we will say $G$ is 
vertex decomposable, shellable, Cohen-Macaulay, or Buchsbaum if ${\rm Ind}(G)$ has the 
corresponding property. Except for the Buchsbaum property, it is enough to 
consider connected components of $G$.

\begin{lemma}\label{connected}
Suppose that $G$ and $H$ are two disjoint graphs that are both
vertex decompos-able/shellable/Cohen-Macaulay. Then $G \cup H$ is 
vertex decomposable/shellable/Cohen-Macaulay. 
\end{lemma}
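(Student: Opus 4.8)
The plan is to handle the three properties separately, exploiting the well-known fact that the independence complex of a disjoint union is the simplicial join of the two independence complexes: $\operatorname{Ind}(G \cup H) = \operatorname{Ind}(G) * \operatorname{Ind}(H)$, where $\Delta_1 * \Delta_2 = \{F_1 \cup F_2 \mid F_1 \in \Delta_1,\ F_2 \in \Delta_2\}$. This holds because a set $W \subseteq V(G) \sqcup V(H)$ is independent in $G \cup H$ if and only if $W \cap V(G)$ is independent in $G$ and $W \cap V(H)$ is independent in $H$, there being no edges between the two vertex sets. Once this identification is in place, each of the three implications follows from a corresponding closure property of joins.

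For vertex decomposability, I would argue by induction on the total number of vertices. If $\operatorname{Ind}(G)$ is a simplex (i.e. $G$ has no edges), then $\operatorname{Ind}(G \cup H)$ is the join of a simplex with $\operatorname{Ind}(H)$, which is again vertex decomposable (joining with a simplex is harmless); symmetrically if $\operatorname{Ind}(H)$ is a simplex. Otherwise pick a shedding vertex $x$ for $\operatorname{Ind}(G)$, say $x \in V(G)$. The key identities are $\operatorname{link}_{\operatorname{Ind}(G\cup H)}(x) = \operatorname{link}_{\operatorname{Ind}(G)}(x) * \operatorname{Ind}(H)$ and $\operatorname{del}_{\operatorname{Ind}(G\cup H)}(x) = \operatorname{del}_{\operatorname{Ind}(G)}(x) * \operatorname{Ind}(H)$; indeed $\operatorname{link}_{\operatorname{Ind}(G)}(x) = \operatorname{Ind}(G \setminus N[x])$ and $\operatorname{del}_{\operatorname{Ind}(G)}(x) = \operatorname{Ind}(G \setminus x)$, and removing vertices from $G$ does not affect $H$ or the disjointness. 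By induction both the link and deletion are vertex decomposable, so $x$ is a shedding vertex for $\operatorname{Ind}(G \cup H)$, which is therefore vertex decomposable. Purity is preserved because $G \cup H$ is well-covered when $G$ and $H$ are.

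For shellability, I would take a shelling order $F_1 < \cdots < F_s$ of $\operatorname{Ind}(G)$ and a shelling order $E_1 < \cdots < E_t$ of $\operatorname{Ind}(H)$, and shell the join using the lexicographic (or product) order on the facets $F_i \cup E_j$: order by the index $i$ first and break ties by $j$. A direct check of the restriction-face condition in part $(ii)$ of the Definition shows this is a shelling of the join — this is the standard fact that a join of shellable complexes is shellable. For Cohen-Macaulayness, the cleanest route is Reisner's criterion together with the observation that every link in $\operatorname{Ind}(G) * \operatorname{Ind}(H)$ is itself a join of a link in $\operatorname{Ind}(G)$ with a link in $\operatorname{Ind}(H)$, combined with the Künneth-type formula for the reduced homology of a join, $\tilde H_k(\Delta_1 * \Delta_2) \cong \bigoplus_{i+j=k-1}\tilde H_i(\Delta_1)\otimes\tilde H_j(\Delta_2)$; this forces the required vanishing below the dimension of the link. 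Alternatively one can cite that the join of Cohen-Macaulay complexes is Cohen-Macaulay directly. The main obstacle is simply bookkeeping: one must verify the link/deletion identities for disjoint unions carefully and confirm that purity (well-coveredness) is maintained at every inductive step, since all four notions in the Definition are only defined for pure complexes.
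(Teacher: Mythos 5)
Your argument is correct, but it is worth noting that the paper does not actually prove this lemma at all: it simply cites Woodroofe \cite[Lemma 20]{W} for the vertex decomposable and shellable cases (in the more general non-pure setting) and Villarreal \cite[Proposition 6.2.8]{Vbook} for the Cohen--Macaulay case. What you have written is essentially a self-contained proof of the content of those citations, organized around the identity ${\rm Ind}(G \cup H) = {\rm Ind}(G) \star {\rm Ind}(H)$ --- the very same join decomposition the paper uses explicitly in the proof of the companion Lemma \ref{buchsbaumunion}. Your three sub-arguments are all sound: the induction on the total number of vertices for vertex decomposability correctly uses that ${\rm link}$ and ${\rm del}$ of a vertex of $G$ in the join factor through $G$ alone (and purity of the deletion is supplied by the definition of vertex decomposability, not something you need to check separately); the lexicographic order on facets $F_i \cup E_j$ does satisfy the restriction condition in Definition 2.1$(ii)$, with the witness $k$ taken from the $G$-shelling when $i$ decreases and from the $H$-shelling when only $j$ decreases; and the K\"unneth formula for joins, applied to ${\rm link}_{\Delta_1 \star \Delta_2}(F) = {\rm link}_{\Delta_1}(F \cap V_G) \star {\rm link}_{\Delta_2}(F \cap V_H)$, gives Reisner's criterion. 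The trade-off is the obvious one: your route is longer but self-contained and makes visible why the statement is true, whereas the paper's citation is shorter and, in the case of \cite{W}, also covers the non-pure generalizations that this paper does not need.
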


\begin{proof}
See \cite[Lemma 20]{W} which states a more general result (i.e., 
for non-pure vertex decomposability and non-pure shellability).
The Cohen-Macaulay case can also be found in \cite[Proposition 6.2.8]{Vbook}.
\end{proof}

The union of two or more Buchsbaum graphs will fail to be Buchsbaum.

\begin{lemma}\label{buchsbaumunion}
Let $G$ and $H$ be two disjoint graphs that are both Buchsbaum, but
not Cohen-Macaulay.  Then
$G \cup H$ is not Buchsbaum.
\end{lemma}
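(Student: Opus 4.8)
The plan is to produce a single vertex $x$ of $G \cup H$ for which ${\rm link}_{{\rm Ind}(G\cup H)}(x)$ fails to be Cohen-Macaulay; by the definition of Buchsbaum in Section~\ref{sec:background}, that is exactly what has to be shown. Write $\Delta = {\rm Ind}(G \cup H)$. The starting point is the observation that, since $G$ and $H$ have disjoint vertex sets, a subset of $V(G) \cup V(H)$ is independent in $G \cup H$ precisely when it is a union $A \cup B$ with $A \in {\rm Ind}(G)$ and $B \in {\rm Ind}(H)$; thus $\Delta$ is the simplicial join ${\rm Ind}(G) * {\rm Ind}(H)$, and more generally, for $A \in {\rm Ind}(G)$ and $B \in {\rm Ind}(H)$ one checks straight from the definition of the link that
\[ {\rm link}_\Delta(A \cup B) = {\rm link}_{{\rm Ind}(G)}(A) * {\rm link}_{{\rm Ind}(H)}(B). \]
Specialising, ${\rm link}_\Delta(x) = {\rm link}_{{\rm Ind}(G)}(x) * {\rm Ind}(H)$ for every $x \in V(G)$.

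Next I would isolate the statement that really carries the lemma: if $\Gamma_1$ is any nonempty simplicial complex and $\Gamma_2$ is a simplicial complex on a disjoint vertex set that is not Cohen-Macaulay, then $\Gamma_1 * \Gamma_2$ is not Cohen-Macaulay. To prove this, apply Reisner's criterion (the definition of Cohen-Macaulay given in Section~\ref{sec:background}) to $\Gamma_2$ to obtain a face $\sigma \in \Gamma_2$ with $\tilde{H}_i({\rm link}_{\Gamma_2}(\sigma),k) \neq 0$ for some $i < \dim {\rm link}_{\Gamma_2}(\sigma)$, then pick a facet $F$ of $\Gamma_1$ and use the link-of-a-join identity above together with the elementary facts that ${\rm link}_{\Gamma_1}(F) = \{\emptyset\}$ for a facet $F$ and $\{\emptyset\} * \Gamma = \Gamma$:
\[ {\rm link}_{\Gamma_1 * \Gamma_2}(F \cup \sigma) = {\rm link}_{\Gamma_1}(F) * {\rm link}_{\Gamma_2}(\sigma) = {\rm link}_{\Gamma_2}(\sigma). \]
Hence $F \cup \sigma$ witnesses the failure of Reisner's criterion for $\Gamma_1 * \Gamma_2$.

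To finish, fix any vertex $x$ of $G$, which exists because $G$ is not Cohen-Macaulay (a graph with at most one vertex has $\dim {\rm Ind}(G) \le 0$ and so is Cohen-Macaulay by Theorem~\ref{facts}$(ii)$). Then ${\rm link}_\Delta(x) = {\rm link}_{{\rm Ind}(G)}(x) * {\rm Ind}(H)$, where the first factor is a nonempty complex and the second, ${\rm Ind}(H)$, is not Cohen-Macaulay by hypothesis; the claim of the previous paragraph then gives that ${\rm link}_\Delta(x)$ is not Cohen-Macaulay, so $G \cup H$ is not Buchsbaum. (This uses only that $H$ is non-Cohen-Macaulay and $G$ is nonempty, so not the full strength of the hypothesis.)

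The argument is short, and the only point requiring care is the bookkeeping with empty faces and the ``irrelevant'' complex $\{\emptyset\}$: the chosen facet $F$ of $\Gamma_1$ may be $\emptyset$, and ${\rm link}_{{\rm Ind}(G)}(x)$ may itself equal $\{\emptyset\}$, so the displayed computations should be checked against the conventions $\dim \emptyset = \dim\{\emptyset\} = -1$; in each such case they still go through. The thing worth emphasising in the write-up is that this route avoids invoking any Künneth-type formula for the reduced homology of a join, or any black-box ``join of Cohen-Macaulay complexes is Cohen-Macaulay'' statement: everything is reduced to Reisner's criterion plus the one elementary identity for links in a join.
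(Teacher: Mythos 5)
Your proof is correct, and it follows the same decomposition as the paper: write ${\rm Ind}(G \cup H) = {\rm Ind}(G) \star {\rm Ind}(H)$, compute ${\rm link}(x) = {\rm link}_{{\rm Ind}(G)}(x) \star {\rm Ind}(H)$ for $x \in V(G)$, and conclude that this link is not Cohen--Macaulay because the factor ${\rm Ind}(H)$ is not. Where you diverge is in how that last step is justified: the paper cites the equivalence ``a join is Cohen--Macaulay if and only if both factors are'' (\cite[Proposition 5.3.16]{Vbook}) as a black box, whereas you prove the one direction actually needed from Reisner's criterion, by choosing a bad face $\sigma$ of $\Gamma_2$, a facet $F$ of $\Gamma_1$, and observing that ${\rm link}_{\Gamma_1 \star \Gamma_2}(F \cup \sigma) = {\rm link}_{\Gamma_2}(\sigma)$, so the same nonvanishing homology group in too low a degree reappears. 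Your computation of the link of a join and the identities ${\rm link}_{\Gamma_1}(F)=\{\emptyset\}$ for a facet and $\{\emptyset\} \star \Gamma = \Gamma$ all check out, including the degenerate case $\Gamma_1 = \{\emptyset\}$. What your route buys is self-containedness (no appeal to the K\"unneth-type machinery behind the full join criterion) and the explicit observation, also implicit in the paper's proof but not stated there, that the lemma needs only that one of the two graphs fails to be Cohen--Macaulay while the other is nonempty; the Buchsbaum hypothesis on $G$ and $H$ is never used. The paper's route is shorter on the page but imports a stronger theorem than the statement requires.
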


\begin{proof}
Let ${\rm Ind}(G)$ and ${\rm Ind}(H)$ be the independence complexes
associated to $G$ and $H$.  The join of these two simplicial
complexes gives us the independence complex of $G \cup H$:
\[{\rm Ind}(G \cup H) = {\rm Ind}(G)\star {\rm Ind}(H) = \{F \cup E ~|~ 
F \in {\rm Ind}(G)~\mbox{and}~ E \in {\rm Ind}(H) \}.\]
Let $x$ be any vertex of $G \cup H$.  Without loss of generality, assume
that $x$ is in $G$.  Then
\[{\rm link}_{{\rm Ind}(G \cup H)}(x) = {\rm link}_{{\rm Ind}(G)}(x) \star {\rm Ind}(H).\]
By \cite[Proposition 5.3.16]{Vbook}, the join of two simplicial complexes is Cohen-Macaulay
if and only if both complexes are Cohen-Macaulay.  But ${\rm Ind}(H)$ is not Cohen-Macaulay,
so ${\rm link}_{{\rm Ind}(G \cup H)}(x)$ cannot be Cohen-Macaulay, and thus, $G \cup H$ cannot
be Buchsbaum.
\end{proof}

\begin{example}\label{example1}
Consider the four-cycle $G = C_4$, that is, the circulant graph $C_4(1)$:
\[\Circulant{4}{1}\]
If we label the vertices $0,1,2,3$ in clockwise order,
then 
${\rm Ind}(G) = \langle \{0,2\},\{1,3\} \rangle$ is disconnected.
Since  $\dim {\rm Ind}(G) =1$, by Theorem \ref{facts} $(iii)$ that $C_4(1)$ is Buchsbaum, 
but not Cohen-Macaulay.  In fact, $C_4(1)$
is the smallest well-covered circulant graph with this property.
The graph $C_8(2)$ consists of two disjoint copies of $C_4(1)$:
\[\Circulant{8}{2}.\]
By Lemma \ref{buchsbaumunion}, $C_8(2)$ cannot be Buchsbaum.  
In fact, $C_8(2)$ is the smallest well-covered circulant
that is not Buchsbaum.  An example of a connected well-covered 
circulant that is not Buchsbaum is
presented in Theorem \ref{minex}
\end{example}

The following lemma will
simplify some of our future arguments.

\begin{lemma}\label{circulantlemma}
Let $G = C_n(S)$ be a well-covered circulant graph.  
\begin{enumerate}
\item[$(i)$]
If 
${\rm link}_{{\rm Ind}(G)}(0)$ is Cohen-Macaulay, then $G$ is Buchsbaum.
\item[$(ii)$] If $\dim {\rm Ind}(G) = 1$, then $G$ is Buchsbaum.
\item[$(iii)$] If $\dim {\rm Ind}(G) = 2$, then $G$ is Buchsbaum
if ${\rm link}_{{\rm Ind}(G)}(0)$ is connected.
\end{enumerate}
\end{lemma}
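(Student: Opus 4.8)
The plan is to exploit the vertex-transitivity of circulant graphs, which collapses the Buchsbaum condition (a priori a statement about the link of \emph{every} vertex) down to a single link. First I would observe that for each integer $k$ the rotation $\sigma_k\colon i \mapsto i+k \pmod n$ is an automorphism of $C_n(S)$, since adjacency in $C_n(S)$ depends only on the residue of $|a-b|$ modulo $n$. Hence $\sigma_k$ induces a simplicial isomorphism of ${\rm Ind}(G)$ taking the vertex $0$ to the vertex $k$, and therefore ${\rm link}_{{\rm Ind}(G)}(k) \cong {\rm link}_{{\rm Ind}(G)}(0)$ for every vertex $k$. Since by definition $G$ is Buchsbaum exactly when ${\rm link}_{{\rm Ind}(G)}(x)$ is Cohen-Macaulay for all $x \in V$, and all these links are mutually isomorphic, $G$ is Buchsbaum if and only if ${\rm link}_{{\rm Ind}(G)}(0)$ is Cohen-Macaulay. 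This is precisely statement $(i)$.

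For $(ii)$ and $(iii)$ I would next pin down the dimension of ${\rm link}_{{\rm Ind}(G)}(0)$ using well-coveredness. Write $d = \dim {\rm Ind}(G)$, so that ${\rm Ind}(G)$ is pure of dimension $d$. A short argument shows the link inherits purity: if $F$ is a facet of ${\rm link}_{{\rm Ind}(G)}(0)$, then $F \cup \{0\}$ must be a facet of ${\rm Ind}(G)$ — any vertex enlarging $F \cup \{0\}$ to a larger face would lie in ${\rm link}_{{\rm Ind}(G)}(0)$ and enlarge $F$ there — so $|F\cup\{0\}| = d+1$, giving $|F| = d$. Thus ${\rm link}_{{\rm Ind}(G)}(0)$ is pure of dimension $d-1$ (and, in particular, nonempty, since $\{0\}$ extends to a facet of ${\rm Ind}(G)$).

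Now $(ii)$ follows: if $\dim {\rm Ind}(G) = 1$ then ${\rm link}_{{\rm Ind}(G)}(0)$ is a nonempty $0$-dimensional complex, hence vertex decomposable and in particular Cohen-Macaulay by Theorem \ref{facts}$(ii)$; applying part $(i)$ shows $G$ is Buchsbaum. Similarly $(iii)$: if $\dim {\rm Ind}(G) = 2$ then ${\rm link}_{{\rm Ind}(G)}(0)$ is $1$-dimensional, so by Theorem \ref{facts}$(iii)$ it is Cohen-Macaulay as soon as it is connected, and part $(i)$ again finishes the argument. I do not expect a genuine obstacle here: the one real idea is that vertex-transitivity reduces the Buchsbaum test to the single vertex $0$, and the only point needing a little care is the purity/dimension bookkeeping for the link.
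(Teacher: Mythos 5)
Your proposal is correct and follows essentially the same route as the paper: vertex-transitivity reduces the Buchsbaum test to ${\rm link}_{{\rm Ind}(G)}(0)$, and parts $(ii)$ and $(iii)$ then follow from the dimension of that link together with Theorem \ref{facts}$(ii)$ and $(iii)$. Your explicit verification that the link is pure of dimension $d-1$ is a detail the paper leaves implicit, but the argument is the same.
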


\begin{proof}
$(i)$  By the symmetry of the graph, ${\rm link}_{{\rm Ind}(G)}(0) \cong 
{\rm link}_{{\rm Ind}(G)}(i)$ for all vertices $i \in \{1,\ldots,n-1\}$.   So,
to check if $G$ is Buchsbaum, it suffices to check  that ${\rm link}_{{\rm Ind}(G)}(0)$
is Cohen-Macaulay.  For $(ii)$, respectively $(iii)$, we use the
fact that $\dim {\rm link}_{{\rm Ind}(G)}(0) = 0$, respectively, $1$,
and then apply Theorem \ref{facts} $(ii)$, respectively, $(iii)$.
\end{proof}


\section{Circulants of the form $C_n(d+1,d+2,\ldots,\lfloor \frac{n}{2} \rfloor)$}

In this section we determine the properties of the independence complex
of well-covered circulants of the form $C_n(d+1,d+2,\ldots,\lfloor \frac{n}{2} \rfloor)$
with $d \geq 1$.
These graphs are sometimes called the complement of the powers of cycles
because they are the complement of $C_n(1,2,\ldots,d)$.

Brown and Hoshino determined all the values of $n$ and $d$ such that
$G =C_n(d+1,d+2,\ldots,\lfloor \frac{n}{2} \rfloor)$ is well-covered,
i.e., ${\rm Ind}(G)$ is a pure simplicial complex:

\begin{theorem}[{\cite[Theorem 4.2]{Brown11}}]\label{wellcoveredcomplement}
Let $n$ and $d$ be integers with $n \geq 2d+2$ and $d \geq 1$.  Then
$C_n(d+1,d+2,\ldots,\lfloor \frac{n}{2} \rfloor)$ is well-covered if and only
if $n > 3d$ or $n = 2d+2$.
\end{theorem}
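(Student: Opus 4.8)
The plan is to pass to the complement: writing $C:=C_n(1,2,\ldots,d)$, the $d$-th power of the $n$-cycle, we have $G:=C_n(d+1,\ldots,\lfloor\tfrac{n}{2}\rfloor)=\overline{C}$, so the independent sets of $G$ are exactly the cliques of $C$, and $G$ is well-covered precisely when all maximal cliques of $C$ have the same size. Two elementary observations (valid for $n\ge2d+2$) get things started: every set of $d+1$ cyclically consecutive vertices (call it a $(d+1)$-block) is a clique of $C$; and each such block is a maximal clique of $C$, since for $n\ge2d+2$ no vertex outside it is adjacent to both of its endpoints (their common closed neighbourhoods are $\{a-d,\ldots,a+d\}$ and $\{a,\ldots,a+2d\}$, which intersect in the block itself). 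Hence $C$ always has a maximal clique of size $d+1$, and the theorem reduces to showing that $C$ has a maximal clique of size $\ne d+1$ if and only if $2d+3\le n\le3d$.

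For the direction ``$n>3d$ or $n=2d+2$ $\Rightarrow$ well-covered'', I would use the \emph{window lemma}: if $Q$ is a clique of $C$ and $v\in Q$, then $Q\subseteq\{v-d,\ldots,v+d\}$, because every vertex of $Q$ lies within circular distance $d$ of $v$. Translating so that $v=0$, and setting $a=\max Q$, $b=-\min Q$ inside $\{-d,\ldots,d\}$, the edge $\{a,-b\}$ forces $a+b\le d$ or $a+b\ge n-d$. When $n>3d$ we have $a+b\le 2d<n-d$, hence $a+b\le d$, so $Q$ lies in the $(a+b+1)$-block $\{-b,\ldots,a\}$ and is therefore contained in a $(d+1)$-block; thus every maximal clique of $C$ equals a $(d+1)$-block and $G$ is well-covered. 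When $n=2d+2$, $C$ is the complete multipartite graph on the $d+1$ antipodal pairs $\{i,i+d+1\}$, whose maximal cliques are its transversals, all of size $d+1$ (equivalently $G=C_{2d+2}(d+1)$ is a perfect matching); so again $G$ is well-covered.

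For the direction ``$2d+3\le n\le3d$ $\Rightarrow$ not well-covered'' (note this range forces $d\ge3$), consider $Q_0=\{0,d,2d\}$. Its pairwise circular distances are $d$, $d$ and $n-2d\in\{3,\ldots,d\}$, so $Q_0$ is a clique of $C$; and no $(d+1)$-block contains all of $0,d,2d$, since any arc of at most $d+1$ consecutive vertices must omit one of them. Extend $Q_0$ to a maximal clique $M$; then $M$ also lies in no $(d+1)$-block. The plan is then to invoke the structural fact that, for $n\ge2d+3$, every clique of $C$ of size $\ge d+1$ is a $(d+1)$-block. Granting this, $|M|<d+1$, so $M$ is a maximal clique of $C$ whose cardinality differs from that of the $(d+1)$-blocks, and $G$ is not well-covered.

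I expect the step just invoked --- that for $n\ge2d+3$ a clique of $C$ not contained in a $(d+1)$-block must be small --- to be the main obstacle. To prove it I would analyse the cyclic gap structure of such a clique $Q$: its largest gap between cyclically consecutive vertices is forced to be small (otherwise the long complementary arc, of at most $d+1$ vertices, would contain $Q$), so $Q$ splits into $k\ge2$ arcs $A_1,\ldots,A_k$ of sizes $\ell_1,\ldots,\ell_k$ separated by gaps $g_1,\ldots,g_k$ with $\sum_i(\ell_i+g_i)=n$. Requiring every vertex of $A_i$ to be adjacent to every vertex of $A_{i+1}$ says that the interval $[g_i+1,\ \ell_i+g_i+\ell_{i+1}-1]$ of clockwise distances realized between $A_i$ and $A_{i+1}$ avoids the forbidden band $(d,n-d)$; since $g_i$ is small and $n\ge2d+3$ this can only happen if $g_i+\ell_i+\ell_{i+1}\le d+1$. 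Summing over $i$ and using $\sum_i(\ell_i+g_i)=n$ gives $|Q|=\sum_i\ell_i\le k(d+1)-n$, which already yields $|Q|\le d$ for $k=3$ and rules out $k=2$ (the bound is negative); for $k\ge4$ one must also feed in the analogous adjacency constraints between non-consecutive arcs, but these likewise force $|Q|$ to be small. The genuinely delicate points are this last bookkeeping, the isolation of the exceptional value $n=2d+2$ (where $C$ is complete multipartite and its size-$(d+1)$ transversal cliques are \emph{not} $(d+1)$-blocks), and checking that the endpoints $n=2d+3$ and $n=3d$ land on the correct side of the dichotomy.
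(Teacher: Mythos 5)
The paper does not prove this statement; it is quoted verbatim from Brown--Hoshino \cite[Theorem 4.2]{Brown11}, so there is no internal proof to compare against. Judged on its own merits, your reduction to cliques of $C=C_n(1,\ldots,d)$ and your treatment of the ``if'' direction are sound: the window lemma correctly shows that for $n>3d$ every clique lies in a $(d+1)$-block, and the $n=2d+2$ case is right. One small repair is needed even there: your justification that a $(d+1)$-block is a maximal clique (``no vertex outside it is adjacent to both endpoints'') is false in the range $2d+2\le n\le 3d$, exactly where you need it for the converse. For example, in $C_6(1,2)$ the block $\{0,1,2\}$ has the outside vertex $4$ adjacent to both $0$ and $2$ (the two closed neighbourhoods wrap around and meet again); the block is still maximal, but you must exhibit, for each outside vertex, some vertex of the block (not necessarily an endpoint) at circular distance more than $d$. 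This is easy to fix.

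The genuine gap is in the converse direction. Your argument rests on the claim that for $n\ge 2d+3$ every clique of size at least $d+1$ is a $(d+1)$-block, and your proposed proof of this claim only yields a contradiction when the clique splits into $k\le 3$ arcs. But summing your own inequality $\ell_i+g_i+\ell_{i+1}\le d+1$ over all $k$ cyclically consecutive pairs gives $2|Q|+\sum_i g_i\le k(d+1)$, i.e.\ $|Q|+n\le k(d+1)$; if $|Q|\ge d+1$ and $n\ge 2d+3$ this forces $k\ge\lceil (3d+4)/(d+1)\rceil= 4$. So the cases you dispose of ($k=2,3$) are precisely the cases that cannot arise for a putative non-block clique of size $d+1$, and the case you defer ($k\ge 4$, where the bound $|Q|\le k(d+1)-n$ is vacuous) is the only one that matters. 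The adjacency constraints between non-consecutive arcs do close this case --- each unordered pair of arcs must be jointly contained, together with everything between them on one side, in an arc of $d+1$ positions, which sets up a tournament on the arcs whose analysis eventually bounds $|Q|$ by $d$ --- but this is the real content of the theorem and is not routine bookkeeping; as written, the ``not well-covered'' half for $2d+3\le n\le 3d$ is unproved except at the endpoint $n=3d$, where $\{0,d,2d\}$ happens to be already maximal.
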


The 
$f$-vectors and $h$-vectors for ${\rm Ind}(C_n(d+1,d+2,\ldots,\lfloor \frac{n}{2} \rfloor))$
for some $n$ and $d$ are given below.

\begin{lemma}\label{hvectorcomplement}
Let $n$ and $d$ be integers with $n > 3d$ and $d \geq 1$.  If $G =
C_n(d+1,d+2,\ldots,\lfloor \frac{n}{2} \rfloor)$, then the 
$f$-vector of ${\rm Ind}(G)$ is given by
\[f({\rm Ind}(G)) = 
\left(1,n,\binom{d}{1}n,\binom{d}{2}n,\binom{d}{3}n,\ldots,\binom{d}{d-1}n,\binom{d}{d}n\right).\]
Consequently, $h({\rm Ind}(G))$, the $h$-vector of ${\rm Ind}(G)$, is
\[\left(1,n-(d+1),(-1)^2\binom{d+1}{2},(-1)^3\binom{d+1}{3},\ldots,(-1)^d\binom{d+1}{d},
(-1)^{d+1}\binom{d+1}{d+1} \right).\]

\end{lemma}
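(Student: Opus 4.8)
The plan is to compute the $f$-vector directly by a bijective/counting argument, and then obtain the $h$-vector by a routine substitution into \eqref{hvector}.

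First I would analyze the independent sets of $G = C_n(d+1, d+2, \ldots, \lfloor n/2 \rfloor)$. Two vertices $a, b$ are \emph{non}-adjacent in $G$ precisely when neither $|a-b|$ nor $n - |a-b|$ lies in $S = \{d+1, \ldots, \lfloor n/2 \rfloor\}$, i.e.\ when the ``circular distance'' between $a$ and $b$ is at most $d$. So an independent set of $G$ is exactly a set of vertices that is pairwise within circular distance $d$ — in other words, a set of vertices all contained in some arc of $n$ consecutive residues of length $\le d$ (this is where the hypothesis $n > 3d$ matters: it guarantees that a pairwise-close set genuinely sits inside one short arc rather than ``wrapping'' ambiguously; I would need to check this carefully). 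Concretely, I claim that a nonempty independent set $F$ with $|F| = k$ is determined by choosing its ``leftmost'' element $i$ (there are $n$ choices, using the cyclic symmetry) together with a $(k-1)$-subset of the $d$ vertices $\{i+1, i+2, \ldots, i+d\}$ that must accompany it. This gives $f_{k-1} = \binom{d}{k-1} n$ for $1 \le k \le d+1$, and $f_{-1} = 1$ as always. Note this also recovers $\dim \mathrm{Ind}(G) = d$, consistent with $\alpha(G) = d+1$.

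The main obstacle is making the ``leftmost element'' notion well-defined and the count exact — i.e.\ showing no independent set is counted twice and none is missed. The subtlety is purely the cyclic wraparound: I must rule out an independent set that could be viewed as sitting in two different length-$d$ arcs, and confirm that every pairwise-close set fits in \emph{some} such arc. The inequality $n > 3d$ is exactly what prevents pathologies (if $F$ spans the full circle in a ``balanced'' way it would force two of its elements to circular distance $> d$); I would isolate this as a short combinatorial sublemma: if $F \subseteq \Z/n\Z$ is pairwise within circular distance $d$ and $n > 3d$, then $F$ lies in a unique minimal arc, whose left endpoint is in $F$. Everything else is bookkeeping.

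Finally, for the $h$-vector I would substitute $f_{j-1} = \binom{d}{j-1} n$ (and $f_{-1}=1$) with $d+1 = \dim\Delta + 1$ into \eqref{hvector}:
\[
h_i = \sum_{j=0}^{i} (-1)^{i-j} \binom{d+1-j}{i-j} f_{j-1}.
\]
For $i = 0$ this gives $h_0 = f_{-1} = 1$; for $i = 1$, $h_1 = f_0 - (d+1) f_{-1} = n - (d+1)$. For $i \ge 2$ the $n$ terms telescope: using the identity $\sum_{j=1}^{i} (-1)^{i-j} \binom{d+1-j}{i-j}\binom{d}{j-1} = 0$ (a standard Vandermonde-type cancellation, provable by comparing coefficients in $(1+x)^{d}(1+x)^{\,\cdot}$ or by a sign-reversing involution), the contribution of all the $f_{j-1}$ with $j \ge 1$ vanishes, leaving only the $j = 0$ term $(-1)^{i}\binom{d+1}{i} f_{-1} = (-1)^{i}\binom{d+1}{i}$. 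This yields exactly the claimed $h$-vector. I expect this last identity to be the only place requiring a genuine (but short) calculation; the rest is direct substitution.
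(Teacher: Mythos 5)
Your proof is correct, and it is a self-contained version of what the paper does by citation. The paper's own proof is much shorter only because it outsources both computations: the $f$-vector is read off from Brown and Hoshino's formula $I(G,x)=1+nx(1+x)^d$ for the independence polynomial, and the passage to the $h$-vector is delegated to Hoshino's thesis. You instead prove both facts directly. Your counting argument --- a nonempty independent set $F$ is exactly a set of vertices pairwise at circular distance at most $d$, hence (using $n>3d$) lies in a unique arc $\{i,i+1,\ldots,i+d\}$ with $i\in F$, giving $f_{k-1}=\binom{d}{k-1}n$ --- is sound. The sublemma you flag is the only delicate point and it does go through: fixing any $v\in F$ forces $F\subseteq\{v-d,\ldots,v+d\}$, an arc of $2d+1<n$ consecutive residues which therefore does not wrap, and the span of $F$ inside this arc must itself be at most $d$ because $n-2d>d$; uniqueness of the left endpoint follows since two distinct candidates $i,i'$ would force $n\le 2d$. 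This is the same combinatorial fact that underlies the Brown--Hoshino formula, and the paper itself uses it explicitly in the proof of Theorem \ref{thmcomplements} when listing the facets $\{i,\ldots,i+d\}$. Your Vandermonde cancellation $\sum_{j=1}^{i}(-1)^{i-j}\binom{d+1-j}{i-j}\binom{d}{j-1}=\binom{i-2}{i-1}=0$ for $i\ge 2$ is also correct (equivalently, the $h$-polynomial is $(1-x)^{d+1}+nx$). What your route buys is independence from the external references; what it costs is having to carry out the arc-uniqueness argument, which you correctly identified as the one step requiring care.
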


\begin{proof}
By \cite[Theorem 3.2]{Brown11}, the independence polynomial of $G = C_n(d+1,d+2,\ldots,\lfloor \frac{n}{2} \rfloor)$
when $n > 3d$ and $d \geq 1$ is given by
\[I=I\left(C_n\left(d+1,\ldots,\left\lfloor \frac{n}{2} \right\rfloor \right),x\right) = 1 + nx(1+x)^d.\]
The coefficient of $x^{i}$ in $I(G,x)$ counts the number of independent
sets of size $i$ in $G$.  So, the coefficient of $x^i$ is precisely $f_{i-1}$, the
number of faces of ${\rm Ind}(G)$ of dimension $(i-1)$.  Thus
$f({\rm Ind}(G))$ can now be computed by expanding out the polynomial $I$.

The $h$-vector of ${\rm Ind}(G)$ is computed from the $f$-vector using \eqref{hvector}.
We omit the details, but we note that the details can be found in \cite[Theorem 4.64]{HPhD}.
\end{proof}

The main result of this section
refines Theorem \ref{wellcoveredcomplement};  in particular,
all the pure independence  complexes of Theorem \ref{wellcoveredcomplement}
are
either vertex decomposable or Buchsbaum.

\begin{theorem}\label{thmcomplements}
Let $n$ and $d$ be integers with $n \geq 2d+2$ and $d \geq 1$.  
The following are equivalent
\begin{enumerate}
\item[$(i)$] $C_n(d+1,d+2,\ldots,\lfloor \frac{n}{2} \rfloor)$ is Buchsbaum.
\item[$(ii)$]  $C_n(d+1,d+2,\ldots,\lfloor \frac{n}{2} \rfloor)$ is well-covered.
\item[$(iii)$]  $n > 3d$ or $n = 2d+2$.
\end{enumerate}
Furthermore,
$C_n(d+1,d+2,\ldots,\lfloor \frac{n}{2} \rfloor)$ is vertex decomposable/shellable/Cohen-Macaulay
if and only if $n=2d+2$ and $d \geq 1$, or $d=1$ and $n > 3$.
\end{theorem}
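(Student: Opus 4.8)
The plan is to establish the three equivalences first, then handle the ``furthermore'' clause about the vertex decomposable/shellable/Cohen-Macaulay regime. For the equivalences, $(ii)\Leftrightarrow(iii)$ is exactly Theorem \ref{wellcoveredcomplement}, and $(i)\Rightarrow(ii)$ is immediate since Buchsbaum complexes are pure. So the real content is $(iii)\Rightarrow(i)$: I want to show that whenever $n>3d$ or $n=2d+2$, the graph $G=C_n(d+1,\ldots,\lfloor\frac n2\rfloor)$ is Buchsbaum. The case $n=2d+2$ should be treated separately and is easy: here $G=C_{2d+2}(d+1)$, which is a perfect matching on $2d+2$ vertices (i.e.\ $(d+1)$ disjoint edges), so ${\rm Ind}(G)$ is the boundary-free join of $d+1$ copies of two points, which is pure of dimension $d$; in fact I expect it to be vertex decomposable (handled in the last part), hence Buchsbaum. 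For $n>3d$, the key is Lemma \ref{circulantlemma}(i): it suffices to prove ${\rm link}_{{\rm Ind}(G)}(0)$ is Cohen-Macaulay. I would identify this link explicitly: since $0$ is adjacent in $G$ to all vertices except $1,2,\ldots,d$ and $n-1,n-2,\ldots,n-d$, the link is the independence complex of the induced subgraph on $\{1,\ldots,d,n-d,\ldots,n-1\}$, which is $C_n(d+1,\ldots,\lfloor\frac n2\rfloor)$ restricted to a $2d$-element ``arc'' — I expect this induced subgraph to be the disjoint union of two cliques $K_d$ (the vertices $1,\ldots,d$ pairwise non-adjacent? no — I need to check distances carefully), or more likely two independent sets joined by a complete bipartite-like pattern. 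In any case ${\rm link}_{{\rm Ind}(G)}(0)$ is the join of two small simplicial complexes on $d$ vertices each, and by \cite[Proposition 5.3.16]{Vbook} a join is Cohen-Macaulay iff both factors are; each factor, being the independence complex of a graph on $d$ vertices that is either a clique or an independent set, is a simplex or a boundary-of-simplex-free complex and hence Cohen-Macaulay. The Lemma \ref{hvectorcomplement} computation, showing $f_{i-1}=\binom{d}{i}n$, strongly suggests the link is exactly the simplex-free join of two $(d-1)$-simplices (whose $f$-vector per vertex gives the $\binom di$), confirming Cohen-Macaulayness.

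\textbf{The vertex decomposable/shellable/Cohen-Macaulay classification.}
Here I must show ${\rm Ind}(G)$ is vertex decomposable exactly when $n=2d+2$ (any $d\ge1$) or $d=1$ (any $n>3$), and is \emph{not} Cohen-Macaulay otherwise (which, combined with the hierarchy in Theorem \ref{facts}(i), collapses all three properties to the same set of $(n,d)$). The $d=1$ case is the family of cycles $C_n$ ($n>3$) with the dominating-vertex set removed — actually $C_n(2,\ldots,\lfloor\frac n2\rfloor)$ is the complement of $C_n$, and for $n>3$ this is well known to be vertex decomposable (e.g.\ it's a connected graph whose complement is a cycle; one can peel vertices since after deleting one vertex the remaining graph is $\overline{P_{n-1}}$ which is again vertex decomposable — induction). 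The $n=2d+2$ case is the perfect matching $(d+1)K_2$: deleting a vertex $x$ in an edge $\{x,y\}$ makes $y$ isolated, and proceeding, ${\rm Ind}$ of a perfect matching is vertex decomposable by repeated vertex splitting (or directly: it's the join of $0$-dimensional complexes, which are vertex decomposable by Theorem \ref{facts}(ii), and joins of vertex decomposable complexes are vertex decomposable — though I'd want to cite \cite[Lemma 20]{W} or prove the join fact). For the negative direction, when $n>3d$, $d\ge2$, and $n\ne 2d+2$, I want to show ${\rm Ind}(G)$ is not Cohen-Macaulay. The cleanest route is the $h$-vector: by Lemma \ref{hvectorcomplement}, $h({\rm Ind}(G))=(1,n-(d+1),\binom{d+1}{2},-\binom{d+1}{3},\ldots,(-1)^{d+1})$, which has a negative entry whenever $d\ge2$ (the $\binom{d+1}{3}$ term appears with sign $(-1)^3=-1$ and is positive, so $h_3<0$). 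By Theorem \ref{facts}(v), a complex with a negative $h$-vector entry is not Cohen-Macaulay. This disposes of all $(n,d)$ with $d\ge2$ and $n>3d$ in one stroke.

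\textbf{Assembling the argument.}
So the write-up would go: (1) $(i)\Rightarrow(ii)$ trivially, $(ii)\Leftrightarrow(iii)$ by Theorem \ref{wellcoveredcomplement}; (2) $(iii)\Rightarrow(i)$: split into $n=2d+2$ (reduce to perfect matching, invoke vertex decomposability proved in step (4)) and $n>3d$ (compute the link of $0$, recognize it as a join of two Cohen-Macaulay complexes on $d$ vertices, apply Lemma \ref{circulantlemma}(i)); (3) for the furthermore, the ``only if'' direction: if $d\ge2$ and $n>3d$, Lemma \ref{hvectorcomplement} plus Theorem \ref{facts}(v) gives a negative $h_3$, so not Cohen-Macaulay, hence not shellable, not vertex decomposable; (4) the ``if'' direction: $n=2d+2$ gives a perfect matching and $d=1,\,n>3$ gives the complement of a cycle, both shown vertex decomposable by an induction on $n$ using the recursive definition (Definition $(i)(b)$), checking that link and deletion of a suitable vertex land back in the same family or in a simplex. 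The main obstacle I anticipate is step (2) in the $n>3d$ case — precisely identifying the induced subgraph on $\{1,\dots,d\}\cup\{n-d,\dots,n-1\}$ and verifying it decomposes as a graph join (equivalently that ${\rm Ind}(G)$-link is a simplicial join), which requires a careful case check on which distances among those $2d$ vertices lie in $S=\{d+1,\dots,\lfloor\frac n2\rfloor\}$; the hypothesis $n>3d$ is exactly what guarantees the two ``arcs'' $\{1,\dots,d\}$ and $\{n-d,\dots,n-1\}$ are each independent (short distances, all $<d+1$) while the cross-distances are all $\ge d+1$ (so cross pairs are edges), making the link the join of a $(d-1)$-simplex with a $(d-1)$-simplex — visibly Cohen-Macaulay. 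A secondary nuisance is justifying the join-of-vertex-decomposable fact in step (4) for the perfect matching, for which I would simply cite the non-pure version in \cite[Lemma 20]{W}.
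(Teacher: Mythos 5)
Your overall architecture matches the paper's: $(ii)\Leftrightarrow(iii)$ from Theorem \ref{wellcoveredcomplement}, $(i)\Rightarrow(ii)$ because Buchsbaum complexes are pure, the case $n=2d+2$ handled via the perfect matching $(d+1)K_2$ and Lemma \ref{connected}, the $d=1$ case via connectedness, and the negative $h_3$ from Lemma \ref{hvectorcomplement} together with Theorem \ref{facts}$(v)$ to rule out Cohen--Macaulayness when $n>3d$ and $d\ge 2$. All of that is sound. However, the crux of the theorem --- showing ${\rm link}_{{\rm Ind}(G)}(0)$ is Cohen--Macaulay when $n>3d$ and $d\ge 2$ --- rests on a structural claim that is false. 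You assert that the two arcs $\{1,\dots,d\}$ and $\{n-d,\dots,n-1\}$ of non-neighbours of $0$ have all cross-distances at least $d+1$, so that every cross pair is an edge. In fact, for $i\in\{1,\dots,d\}$ and $n-k$ with $k\in\{1,\dots,d\}$, the circular distance is $\min(i+k,\,n-i-k)$, and since $n>3d$ forces $n-i-k>d$, the pair is an edge if and only if $i+k\ge d+1$; thus $1$ and $n-1$ (distance $2$) are non-adjacent whenever $d\ge 2$. Worse, even if your claim were true it would prove the opposite of what you need: if the induced graph on the non-neighbours of $0$ were the complete bipartite graph between two independent $d$-sets, its independence complex would be the \emph{disjoint union} of two $(d-1)$-simplices --- a graph join yields a union of independence complexes, while a disjoint union of graphs yields a simplicial join; you have this correspondence reversed --- and a disconnected complex of dimension $d-1\ge 1$ is not Cohen--Macaulay. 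A sanity check against the data also rules out your guess: for $n=7$, $d=2$ the link of $0$ has $f$-vector $(1,4,3)$, whereas two disjoint edges give $(1,4,2)$.

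The structure you actually need is the following. Since $f_d=n$ by Lemma \ref{hvectorcomplement}, and each of the $n$ consecutive windows $\{i,i+1,\dots,i+d\}$ (indices mod $n$) is independent when $n>3d$, these windows are precisely the facets of ${\rm Ind}(G)$. Consequently the facets of ${\rm link}_{{\rm Ind}(G)}(0)$ are the $d+1$ sets $\{n-d+j,\dots,n-1,1,\dots,j\}$ for $j=0,\dots,d$, each obtained from the previous one by exchanging a single vertex; listed in this order they form a shelling, so the link is Cohen--Macaulay and Lemma \ref{circulantlemma}$(i)$ gives that $G$ is Buchsbaum. This identification of the facets, and the shelling of the resulting ``sliding window'' complex, is the step your proposal is missing.
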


\begin{proof}
Let $G = C_n(d+1,d+2,\ldots,\lfloor \frac{n}{2} \rfloor)$ with $n \geq 2d+2$ and $d \geq 1$.

The equivalence of $(ii)$ and $(iii)$ is simply Theorem \ref{wellcoveredcomplement}.   Furthermore,
if $G$ is Buchsbaum, then $G$ must be well-covered, so $(i)$ implies $(ii)$.  It suffices
to show that if $G$ is well-covered, it is also Buchsbaum.

If $n=2d+2$, then $G = C_{2d+2}(d+1)$, which implies that $G$ is $(d+1)$ disjoint copies of 
the complete graph $K_2$.
Since a $K_2$ is vertex decomposable, $G$ must be vertex decomposable (Lemma \ref{connected}), 
and hence Buchsbaum.

So, suppose that $n >3d$ and $d \geq 1$.  
We first note that ${\rm Ind}(G)$ has dimension $d$ from its $f$-vector
in Lemma \ref{hvectorcomplement}.  
Each element of the set
\[ \left\{ \{i,i+1,i+2,\ldots,i+d\} ~|~ \mbox{$0 \leq i \leq n-1$}\right\}\]
where the indices are computed modulo $n$, is an independent set in $G$.
Because $f_{d} = n$, and because each element of the above set is distinct,
these elements form a complete list of the facets of ${\rm Ind}(G)$.  

If $d=1$, then ${\rm Ind}(G)$ is connected, so it is vertex decomposable by
Theorem \ref{facts} and hence  Buchsbaum.  If $d > 1$, the facets
\[\{n-d,n-d+1,\ldots,0\},\{n-d+1,n-d+2,\ldots,0,1\},
\ldots,\{0,1,\ldots,d\}\]
are a complete list of the facets that contain $0$.  Thus, the facets of
${\rm link}_{{\rm Ind}(G)}(0)$ are
\[ \{n-d,n-d+1,\ldots,n-1\},\{n-d+1,n-d+2,\ldots,n-1,1\},\]
\[\{n-d+2,n-d+3,\ldots,n-1,1,2\},
\ldots,\{1,\ldots,d\}.\]
It follows from the order in which we have written these facets that 
${\rm link}_{{\rm Ind}(G)}(0)$ is shellable, and hence, by Theorem \ref{facts} Cohen-Macaulay.  
So, $G$ is Buchsbaum by Lemma \ref{circulantlemma}.

Observe that when $n=2d+2$, or $n > 3$ and $d=1$, then 
${\rm Ind}(G)$ is vertex decomposable, and so vertex decomposable, shellable, and 
Cohen-Macaulay.  On the other hand, if $n > 3d$ and $d \geq 2$,
then by Lemma \ref{hvectorcomplement},
the entry $h_3$ of $h({\rm Ind}(G))$ is negative.  Thus, by Theorem \ref{facts} $(v)$,
$G$ is not Cohen-Macaulay (and thus,
not vertex decomposable or shellable either).  This completes the proof of the final statement.
\end{proof}

\begin{remark}
Hoshino \cite[Theorem 4.64]{HPhD} first characterized when  $C_n(d+1,\ldots,\lfloor \frac{n}{2} \rfloor)$ 
is shellable.  Our theorem shows that the independence complex still has some structure
when the graph is not shellable, and moreover, it has a stronger structure if it is shellable.
\end{remark}

\begin{remark}
Hibi \cite{Hbook} is attributed with first asking for a 
characterization  of the $h$-vectors of Buchsbaum simplicial complexes.
This question remains open (see \cite{Mu,T} for some work on this problem).
However,  by the above theorem and Lemma \ref{hvectorcomplement}, 
\[\left(1,n-(d+1),\binom{d+1}{2},-\binom{d+1}{3},\ldots,(-1)^d\binom{d+1}{d},
(-1)^{d+1}\binom{d+1}{d+1} \right)\]
is a valid $h$-vector of a $d$-dimensional Buchsbaum simplicial complex on $n$ vertices
with $n > 3d$ and $d \geq 2$.
\end{remark}


\section{Circulants of the form $C_n(1,\ldots,\hat{i},\ldots,\lfloor \frac{n}{2} \rfloor)$}

Moussi's thesis \cite{M} contains a number of families of well-covered
circulants.  We analyze the family $G = C_n(S)$ with $|S| = \lfloor \frac{n}{2} \rfloor - 1$.  
As shown in \cite{M}, all circulants in this family are well-covered (below, 
$\alpha(G)$ denotes the size of the largest independent set of $G$):  

\begin{theorem}[{\cite[Theorem 6.4]{M}}]\label{class11}
Let $G = C_n(S)$ be the circulant graph with $S = \{1,\ldots,\hat{i},\ldots,\lfloor \frac{n}{2} \rfloor\}$
for any $1 \leq i \leq \lfloor \frac{n}{2} \rfloor$.  Then $G$ is well-covered.  Furthermore,
$\alpha(G) = 2$ except if $i = \frac{n}{3}$, in which case, $\alpha(G) =3$.
\end{theorem}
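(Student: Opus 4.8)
The plan is to pass to the complement graph. Since $C_n(\{1,2,\ldots,\lfloor\frac n2\rfloor\})=K_n$ and $S=\{1,2,\ldots,\lfloor\frac n2\rfloor\}\setminus\{i\}$, the complement $\overline G$ of $G$ inside $K_n$ is precisely the circulant $C_n(\{i\})$. The independent sets of $G$ are exactly the cliques of $\overline G=C_n(\{i\})$, so the entire statement reduces to understanding the maximal cliques of $C_n(\{i\})$: the graph $G$ is well-covered if and only if all maximal cliques of $C_n(\{i\})$ have a common size, and that common size is $\alpha(G)$.

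The key structural input is the classical description of $C_n(\{i\})$. Setting $g=\gcd(n,i)$ and $\ell=n/g$, translation by $i$ partitions $\mathbb Z_n$ into $g$ orbits of size $\ell$, and the subgraph of $C_n(\{i\})$ induced on each orbit is a cycle $C_\ell$ (with the convention that $\ell=2$ gives a single edge $K_2$, which is exactly the case $i=\tfrac n2$; note $\ell\ge 2$ always, since $1\le i\le\lfloor\frac n2\rfloor$ forces $i\not\equiv 0\pmod n$). Hence $C_n(\{i\})$ is a disjoint union of $g$ copies of $C_\ell$. A clique lies in a single connected component, so the maximal cliques of $C_n(\{i\})$ are just the maximal cliques of these copies of $C_\ell$. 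Then I would record the elementary fact that every maximal clique of $C_\ell$ has size $3$ when $\ell=3$ and size $2$ when $\ell=2$ or $\ell\ge4$ (for $\ell\ge4$ each vertex has degree $2$, hence lies in an edge, and $C_\ell$ is triangle-free). Since all $g$ components are isomorphic, every maximal clique of $C_n(\{i\})$ has the same size $c(\ell)\in\{2,3\}$; translating back, every maximal independent set of $G$ has size $c(\ell)$, so $G$ is well-covered with $\alpha(G)=c(\ell)$.

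It remains to pin down when $c(\ell)=3$, i.e.\ when $\ell=3$. We have $\ell=3\iff\gcd(n,i)=\tfrac n3$, which requires $3\mid n$ and $i$ to be a multiple of $\tfrac n3$; since $1\le i\le\lfloor\frac n2\rfloor<\tfrac{2n}3$, the only such multiple is $i=\tfrac n3$, and conversely $i=\tfrac n3$ gives $\gcd(n,i)=\tfrac n3$ and $\ell=3$. Finally, $\alpha(G)\ge2$ in every case, because $|S|=\lfloor\frac n2\rfloor-1<\lfloor\frac n2\rfloor$ forces $G\ne K_n$, so $\overline G$ has at least one edge. Hence $\alpha(G)=3$ when $i=\tfrac n3$ and $\alpha(G)=2$ otherwise, as claimed.

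I do not expect a genuine obstacle here: the argument is a structural observation about $C_n(\{i\})$ together with the trivial remark that a disjoint union of cycles has all maximal cliques of equal size. The only points demanding care are the degenerate cycle length $\ell=2$ (the perfect-matching case $i=\tfrac n2$, which still gives $c(\ell)=2$ and should not be confused with a triangle) and the short arithmetic check that $i=\tfrac n3$ is the unique value of $i$ in the prescribed range producing triangular components.
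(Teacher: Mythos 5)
Your argument is correct. Note that the paper itself gives no proof of this statement --- it is quoted from Moussi's thesis \cite[Theorem 6.4]{M} --- so there is nothing internal to compare against; but your route through the complement, namely that $\overline{G}=C_n(\{i\})$ is a disjoint union of $\gcd(n,i)$ cycles of length $n/\gcd(n,i)$ whose maximal cliques all have size $2$ unless the cycle length is $3$, is exactly the structural picture the paper relies on later (in the proof of Theorem \ref{theorem62} it uses that each vertex $a$ is non-adjacent in $G$ precisely to $a\pm i$, and that for $i=\frac n3$ the facets are the triples $\{j,i+j,2i+j\}$). Your handling of the degenerate cases ($\ell=2$ when $i=\frac n2$, the exclusion of $i=\frac{2n}{3}$ by the range restriction, and the absence of isolated vertices so that every maximal clique has size at least $2$) is complete.
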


As in the previous section, the well-covered circulants in this family can be divided into two groups, those
that are vertex decomposable and those that are merely Buchsbaum.

\begin{theorem}\label{theorem62}
Let $G = C_n(S)$ be the circulant graph with 
$S = \{1,\ldots,\hat{i},\ldots,\lfloor \frac{n}{2} \rfloor\}$
for any $1 \leq i \leq \lfloor \frac{n}{2} \rfloor$.  Then $G$ is Buchsbaum.  Furthermore
$G$ is vertex decomposable/shellable/Cohen-Macaulay if and only if ${\rm gcd}(i,n) = 1$.
\end{theorem}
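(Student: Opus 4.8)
The plan is to identify $G$ as the complement of the single-jump circulant $C_n(i)$. Since $S=\{1,\ldots,\lfloor \frac{n}{2} \rfloor\}\setminus\{i\}$, a pair $\{a,b\}$ of distinct vertices is a non-edge of $G$ exactly when its circulant distance equals $i$; hence $\overline{G}=C_n(i)$, and the independent sets of $G$ are precisely the cliques of $C_n(i)$. It is standard that $C_n(i)$ is a disjoint union of $g:=\gcd(i,n)$ cycles, one on each coset of the subgroup $\langle i\rangle\leq\Z_n$, each of length $m:=n/g$ (with the convention that a ``$2$-cycle'' is the edge $K_2$, which occurs exactly when $i=\frac n2$). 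The first step is to record this structure and to note, using $1\leq i\leq\lfloor \frac{n}{2} \rfloor$, that $m=3$ forces $i=\frac n3$ and $m=2$ forces $i=\frac n2$. Consequently, if $i\neq\frac n3$ then every component of $C_n(i)$ is triangle-free, while if $i=\frac n3$ (so $3\mid n$) the components of $C_n(i)$ are triangles whose cliques realize ${\rm Ind}(G)$ as a $2$-dimensional complex.

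Second, I would dispatch the generic case $i\neq\frac n3$. Here $C_n(i)$ has an edge and no isolated vertex, so the faces of ${\rm Ind}(G)$ are the vertices together with the edges of $C_n(i)$, and thus ${\rm Ind}(G)$ coincides, as a simplicial complex, with the graph $C_n(i)$ viewed as a pure $1$-dimensional simplicial complex (purity is Theorem \ref{class11}, or note that $\{a,a+i\}$ is an independent set through any vertex $a$). By Lemma \ref{circulantlemma}$(ii)$ it is Buchsbaum, and by Theorem \ref{facts}$(iii)$ it is vertex decomposable/shellable/Cohen-Macaulay if and only if $C_n(i)$ is connected, i.e. if and only if $g=\gcd(i,n)=1$. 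This settles the statement whenever $i\neq\frac n3$.

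Third, I would handle $i=\frac n3$. If $n=3$ then $S=\emptyset$ and ${\rm Ind}(G)$ is the full simplex on three vertices, hence vertex decomposable; since $\gcd(i,n)=1$ here, the claim holds. If $n>3$, then $C_n(\frac n3)$ is a disjoint union of $\frac n3\geq 2$ triangles, so ${\rm Ind}(G)$ is a disjoint union of $\frac n3$ copies of the full $2$-simplex; being disconnected of dimension $2$, it is not Cohen-Macaulay by Theorem \ref{facts}$(iv)$, and hence neither shellable nor vertex decomposable. On the other hand ${\rm link}_{{\rm Ind}(G)}(0)$ is the full $1$-simplex on $\{\frac n3,\frac{2n}3\}$, which is connected, so $G$ is Buchsbaum by Lemma \ref{circulantlemma}$(iii)$. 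As $\gcd(\frac n3,n)=\frac n3>1$ in this range, this again matches the asserted dichotomy.

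Finally I would assemble the cases: $G$ is Buchsbaum for every admissible $i$, and the vertex decomposable/shellable/Cohen-Macaulay members of the family are exactly those with $\gcd(i,n)=1$ (observing that $\gcd(i,n)=1$ together with $i=\frac n3$ forces $n=3$, so the two regimes agree on their overlap). I do not expect a genuine obstacle; the only care required is bookkeeping of the degenerate subcases — $i=\frac n2$ (components are single edges), $n=3$ (the full simplex), and $i=\frac n3$ with $n>3$ (the $2$-dimensional disconnected complex) — and checking that the Buchsbaum conclusion is uniform across them, which it is because in each case every vertex link is a full simplex of dimension $0$ or $1$.
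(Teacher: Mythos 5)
Your proof is correct and follows essentially the same route as the paper: the same case split ($i\neq \frac{n}{3}$ versus $i=\frac{n}{3}$), the same dimension-based appeals to Lemma \ref{circulantlemma} and Theorem \ref{facts}, and the same connectivity criterion via $\gcd(i,n)$. Your explicit identification $\overline{G}=C_n(i)$ and its cycle decomposition is just a cleaner packaging of the paper's observation that each vertex $a$ is non-adjacent only to $a\pm i$, so the arguments are the same in substance.
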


\begin{proof}
Let $G = C_n(1,\ldots,\hat{i},\ldots,\lfloor \frac{n}{2} \rfloor)$ for some 
$i \in \{1,\ldots, \lfloor \frac{n}{2} \rfloor\}$.  By Theorem \ref{class11}, ${\rm Ind}(G)$ is pure.  

If $i \neq \frac{n}{3}$, then $\dim {\rm Ind}(G) =  1$ since $\alpha(G) =2$ by Theorem \ref{class11}.  Thus,
$G$ is Buchsbaum by Lemma \ref{circulantlemma} $(ii)$.
If $i = \frac{n}{3}$, the $\dim {\rm Ind}(G) =2$.
In particular,
\begin{equation}\label{facetsremoveone}
{\rm Ind}(G) = \langle \{0, i, 2i\}, \{1, i+1, 2i+1\},\ldots,\{i-1, 2i-1, 3i-1\}\rangle.
\end{equation}
Because ${\rm link}_{{\rm Ind}(G)}(0) = \langle \{i,2i\} \rangle$ is connected,
we can apply Lemma \ref{circulantlemma}.  So, $G$ is always Buchsbaum.

We now prove the second statement.  We treat the cases $i \neq \frac{n}{3}$ and 
$i = \frac{n}{3}$ separately.  

If  $i \neq \frac{n}{3}$, then $\dim {\rm Ind}(G) =  1$.
So, by Theorem \ref{facts}, it suffices to show that ${\rm Ind}(G)$ is connected if and only
${\rm gcd}(i,n) =1$.

If ${\rm gcd}(i,n) = 1$, then the map $\phi:\mathbb{Z}_n \rightarrow \mathbb{Z}_n$ given by
$\phi(j) = ji$ is a bijection.  So, the elements $\{0,i,2i,3i,\ldots,(n-1)i\}$ are all distinct
elements.  But this implies that  
\[\{0,i\},\{i,2i\},\{2i,3i\},\ldots,\{(n-1)i,0\}\]
is path of facets in ${\rm Ind}(G)$ that includes all the vertices, and hence
${\rm Ind}(G)$ is connected.

On the other hand, suppose that ${\rm gcd}(i,n) = k > 1$.  
Then $\{0,i,2i,\ldots,(n/k-1)i\}$ and 
$\{1,i+1,\ldots,(n/k-1)i+1\}$ are disjoint sets in $\mathbb{Z}_n$.  
Now every vertex $a$ is non-adjacent in $G$ to exactly two vertices, namely $a+i$ and $a-i$ (modulo $n$).  
But then there is no path from the connected facets
\[\{0,i\},\{i,2i\},\ldots,\{(n/k-1)i,0\}\]
to any of the connected facets
\[\{1,i+1\},\{i+1,2i+1\},\ldots,\{(n/k-1)i+1,1\}.\]
In other words, ${\rm Ind}(G)$ is disconnected. 

If $i = \frac{n}{3}$, then ${\rm gcd}(i,n) =1$ if and only if $i=1$, i.e., $n =3$.  
The facets of ${\rm Ind}(G)$ are given in \eqref{facetsremoveone}.
If $i=1$ and $n=3$, then ${\rm Ind}(G)$ is simply the simplex with unique maximal facet 
$\{ 0,1,2\}$,
and thus, it is vertex decomposable.  If $i > 1$, then ${\rm Ind}(G)$ is a disconnected simplicial 
complex of dimension two, so by Theorem \ref{facts} $(iv)$, 
it is not Cohen-Macaulay, and thus, not vertex-decomposable.
\end{proof}


\section{One-paired circulants}\label{cnab} 

In \cite{BGM}, Boros {\it et al.} studied circulant graphs for which every maximal clique 
(a clique is a subgraph in which every vertex is adjacent to every other vertex)
intersects each maximal independent set.  Such a graph is called a CIS graph:

\begin{definition}
A graph $G$ is {\it CIS} if for every maximal clique $C$ and every maximal independent set 
$I$ in $G$, $C\cap I \neq \emptyset$. (CIS is an acronym for Cliques Intersect Stable sets).
\end{definition}

Boros {\it et al.} \cite[Theorem 3]{BGM} showed that CIS circulants are well-covered.
In fact, the main theorem of \cite{BGM} is a classification of circulant graphs
that are CIS; precisely, the circulant graph $G$ is CIS if and only if all maximal
independent sets have size $\alpha(G)$ and all maximal cliques have size $\omega(G)$, and
$\alpha(G)\omega(G) = |V(G)|$.

In \cite{BGM}, the authors also describe how to construct some CIS circulants graphs. One construction
is the one-paired circulants described below. We provide a more direct proof that
one-paired circulants are CIS (Corollary~\ref{omega}) by first characterizing their
structure (Theorem~\ref{structure}).  We then consider 
the independence complex of a one-paired circulant.

\begin{definition} 
The circulant graph $G=C_n(S)$ is \emph{one-paired} if there exists
an ordered pair of positive integers $(a,b)$
such that $ab|n$ 
and $S = \left\{ d\in \{1,\ldots,\lfloor \frac{n}{2} \rfloor\} ~: ~
\mbox{$a|d$ and $ab\nmid d$} \right\}$. 
We denote the one-paired circulant by $G = C(n;a,b)$.
\end{definition}

\begin{example}
Let $n=12$ and consider the ordered pair $(2,3)$.  Then $C(12;2,3)$ is the 
circulant graph $C_{12}(2,4)$.  Compare this graph to $C(12;3,2)$ which is $C_{12}(3)$.
\end{example}

We begin with a structural result for $C(n;a,b)$.   Note that
$G\vee H$, the \emph{join} of $G$ and $H$, is the graph with
vertex set $V_G \cup V_H$ and edges 
$E_G \cup E_H \cup \{\{x,y\} ~|~ x \in V_G ~\mbox{and}~ y \in V_H\}$.

\begin{theorem}\label{structure}
Let $G = C(n;a,b)$ be a one-paired circulant. Then
\[C(n;a,b)  = \bigcup_{i=1}^a \left(\bigvee_{j=1}^{b} \overline{K_{\frac{n}{ab}}} \right)
~~\mbox{and $\alpha(G) = \frac{n}{b}$}.\]
\end{theorem}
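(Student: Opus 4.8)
<br>

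\textbf{Proof proposal.} The plan is to describe the adjacency relation of $G = C(n;a,b)$ entirely in terms of divisibility of differences modulo $n$, and then read off the decomposition as a disjoint union of joins; the independence number then follows formally.

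First I would replace the connection set $S$ by the full symmetric difference set it generates. Since $ab\mid n$ (and hence $a\mid n$), the conditions ``$a\mid d$'' and ``$ab\mid d$'' are well-defined for $d\in\Z_n$, and I claim the symmetric connection set of $G$ is exactly
\[
T \;:=\; \bigl\{\, t\in\Z_n\setminus\{0\} ~:~ a\mid t \text{ and } ab\nmid t \,\bigr\}.
\]
Indeed, if $d\in S$ then $a\mid(n-d)$ and, because $ab\mid n$, $ab\nmid(n-d)$; so $T$ is closed under $t\mapsto -t$, and conversely every element of $T$ is congruent to $\pm d$ for some $d\in\{1,\dots,\lfloor n/2\rfloor\}$ lying in $S$. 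Hence $\{x,y\}$ is an edge of $G$ if and only if $x-y\in T$, i.e.\ if and only if $a\mid(x-y)$ and $ab\nmid(x-y)$. (The only point needing a moment's care here is the case $n$ even with $x-y=n/2$, where $d$ and $n-d$ coincide.)

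Next I would peel off the $a$ disjoint pieces. Partition $V=\Z_n$ into the residue classes $V_0,\dots,V_{a-1}$ modulo $a$, each of size $n/a$. If $x,y$ lie in distinct classes then $a\nmid(x-y)$, so $\{x,y\}\notin E(G)$; therefore $G=\bigcup_{r=0}^{a-1}G[V_r]$ as a disjoint union. Now fix $r$ and identify $V_r$ with $\Z_{n/a}$ via $v\mapsto (v-r)/a$. Under this bijection a difference within $V_r$ corresponds to $a$ times a difference in $\Z_{n/a}$, so the condition $ab\mid(x-y)$ becomes divisibility by $b$ in $\Z_{n/a}$. Thus $G[V_r]$ is isomorphic to the graph on $\Z_{n/a}$ in which $u\sim v$ iff $u\not\equiv v\pmod b$; since $b\mid n/a$, this is precisely the complete $b$-partite graph whose parts are the $b$ residue classes mod $b$, each of size $(n/a)/b=\frac{n}{ab}$. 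In other words $G[V_r]\cong\bigvee_{j=1}^{b}\overline{K_{\frac{n}{ab}}}$, and assembling the $a$ blocks gives the claimed decomposition.

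Finally, for $\alpha(G)$ I would use that independence number is additive over disjoint unions and that $\alpha\bigl(\bigvee_{j=1}^{b}H_j\bigr)=\max_j\alpha(H_j)$, since an independent set in a join meets at most one of the $H_j$. Each $\overline{K_{n/(ab)}}$ has independence number $\frac{n}{ab}$, so $\alpha(G[V_r])=\frac{n}{ab}$ and hence $\alpha(G)=a\cdot\frac{n}{ab}=\frac{n}{b}$. I expect the main (minor) obstacle to be purely the bookkeeping in the first paragraph—verifying that $S$, which only lists distances up to $\lfloor n/2\rfloor$, really yields the full symmetric set $T$—after which everything reduces to a change of variables.
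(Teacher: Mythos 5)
Your proof is correct and follows essentially the same route as the paper's: both decompose $G$ into the $a$ residue classes modulo $a$ (giving the disjoint union) and then identify the $b$ residue classes modulo $ab$ inside each as the parts of a complete multipartite graph, i.e.\ a join of copies of $\overline{K_{n/(ab)}}$. Your version is somewhat more explicit than the paper's — in symmetrizing the connection set $S$ to the divisibility criterion on differences, in checking non-adjacency across classes mod $a$, and in deriving $\alpha(G)$ from the behaviour of the independence number under disjoint unions and joins — but the underlying argument is the same.
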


\begin{proof}
Let $G = C(n;a,b)$ be a one-paired circulant with $\alpha = \alpha(G)$. By the definition
of a one-paired circulant, $n=kab$ for some positive integer $k$. 
Let $I=\{0,ab,2ab,\ldots,(k-1)ab\}$. Note that $I$ is an independent set of size $k$ 
in $G$. By vertex transitivity of $G$,  the cosets in 
$W=\{I, I+a,I+2a,\ldots, I+(b-1)a\}$ are $b$ disjoint independent sets of size $k$. 
We claim that the subgraph of $G$ induced by $W$ is $\bigvee^b_{j=1} \overline{K_k}$. 
In particular, suppose $m_i\in I+ia$ and $m_r\in I+ra$ for
some  $i,r \in\{0,1,\dots, (b-1)\}$, $i>r$. Then $a| (m_i-m_r)$ but
$ab\nmid (m_i-m_r)$ since $(i-r)<b$. It follows that $m_i$ is adjacent to $m_r$ in
$G$. Hence the claim is established. 

The cosets $W,W+1,\ldots,W+(a-1)$ form a partition of the vertex set 
of $G$ into disjoint graphs on $kb$ vertices. Thus
$G=\bigcup_{i=1}^a {\left(\bigvee_{j=1}^{b}  \overline{K_{k}}\right)}$. 
It follows that $\alpha(G)=ak = n/b$.
\end{proof}

Setting $a=1$ in Theorem~\ref{structure} gives us the following corollary:

\begin{corollary}\label{substructure}
\[ C(n;1,b)= \bigvee_{j=1}^b \overline{K_{\frac{n}{b}}}.\]
\end{corollary}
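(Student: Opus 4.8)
The final statement to prove is Corollary~\ref{substructure}, namely that $C(n;1,b) = \bigvee_{j=1}^b \overline{K_{n/b}}$.

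\medskip

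The plan is to obtain this as a direct specialization of Theorem~\ref{structure}. First I would set $a = 1$ in the conclusion of Theorem~\ref{structure}, which asserts
\[
C(n;a,b) = \bigcup_{i=1}^a \left(\bigvee_{j=1}^b \overline{K_{\frac{n}{ab}}}\right).
\]
With $a=1$ the outer union $\bigcup_{i=1}^1$ collapses to a single copy, and the quantity $\frac{n}{ab}$ becomes $\frac{n}{b}$, so the right-hand side is exactly $\bigvee_{j=1}^b \overline{K_{\frac{n}{b}}}$, which is the claimed description. I should also note that the hypothesis of Theorem~\ref{structure} requires $ab \mid n$, and with $a=1$ this is just $b \mid n$, which is precisely the condition under which $C(n;1,b)$ is defined as a one-paired circulant, so there is no loss in applying the theorem.

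\medskip

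One small point worth spelling out, if any justification is needed at all, is that setting $a=1$ in the \emph{definition} of a one-paired circulant is consistent: when $a=1$ the connection set becomes $S = \{d \in \{1,\ldots,\lfloor n/2\rfloor\} : 1 \mid d \text{ and } b \nmid d\} = \{d : b \nmid d\}$, so $C(n;1,b)$ is the complement of $C_n(b, 2b, \ldots)$, and this is exactly the $a=1$ instance that Theorem~\ref{structure} covers. Thus no separate argument is required and the corollary follows immediately.

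\medskip

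There is essentially no obstacle here: the entire content is in Theorem~\ref{structure}, and the corollary is a one-line substitution. If I wanted to make the proof fully self-contained rather than citing the theorem, I would re-run the same argument with $a=1$: the coset $I = \{0, b, 2b, \ldots, (k-1)b\}$ with $n = kb$ is independent, its $b$ translates $I, I+1, \ldots, I+(b-1)$ partition $V(G)$ into $b$ independent sets each of size $k = n/b$, and any two vertices lying in different translates differ by something not divisible by $b$, hence are adjacent; this gives the join structure directly. But since Theorem~\ref{structure} is already proved, the cleaner route is simply to invoke it with $a=1$.
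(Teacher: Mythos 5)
Your proof is correct and matches the paper exactly: the paper states the corollary as an immediate consequence of setting $a=1$ in Theorem~\ref{structure}, with no further argument needed. Your additional checks (that the hypothesis $ab\mid n$ becomes $b\mid n$ and that the connection set specializes consistently) are fine but not required.
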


Re-combining with 
Theorem~\ref{structure}, we can re-characterize the structure of one-paired circulants: 

\begin{corollary}\label{structure2}
\[C(n;a,b)  =  \bigcup_{i=1}^a C\left(\frac{n}{a};1,b\right).\]
\end{corollary}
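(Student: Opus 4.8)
The plan is to deduce Corollary~\ref{structure2} directly from the two results already in hand, namely Theorem~\ref{structure} and Corollary~\ref{substructure}, by matching up the disjoint-union decompositions term by term. First I would observe that Theorem~\ref{structure} writes $C(n;a,b)$ as a disjoint union of $a$ copies of the single graph $\bigvee_{j=1}^b \overline{K_{n/(ab)}}$. Separately, Corollary~\ref{substructure} applied with $n$ replaced by $n/a$ (which is a legitimate substitution since $ab \mid n$ forces $b \mid n/a$, so $C(n/a;1,b)$ is defined) gives $C(n/a;1,b) = \bigvee_{j=1}^b \overline{K_{(n/a)/b}} = \bigvee_{j=1}^b \overline{K_{n/(ab)}}$. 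Thus each of the $a$ connected pieces appearing in Theorem~\ref{structure} is, up to isomorphism, exactly $C(n/a;1,b)$, and assembling the $a$ disjoint copies yields $C(n;a,b) = \bigcup_{i=1}^a C(n/a;1,b)$, which is the claim.

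The key steps, in order, are: (1) check that $C(n/a;1,b)$ is a well-defined one-paired circulant, i.e. that $b \mid n/a$, using $ab \mid n$; (2) invoke Corollary~\ref{substructure} with the parameter $n/a$ to identify $C(n/a;1,b)$ with the join $\bigvee_{j=1}^b \overline{K_{n/(ab)}}$; (3) invoke Theorem~\ref{structure} to express $C(n;a,b)$ as $\bigcup_{i=1}^a \bigl(\bigvee_{j=1}^b \overline{K_{n/(ab)}}\bigr)$; and (4) substitute the identification from step (2) into step (3) to get the stated decomposition, noting in passing that the $\alpha$ computation is already consistent, since $\alpha(C(n;a,b)) = n/b = a\cdot(n/ab) = a\cdot\alpha(C(n/a;1,b))$, as one expects for a disjoint union of $a$ identical pieces.

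This is essentially a bookkeeping argument, so I do not anticipate a genuine obstacle. The one place to be careful — and the closest thing to a "hard part" — is step (1): one must make the elementary but necessary divisibility check that $ab \mid n$ implies $b \mid n/a$, so that the notation $C(n/a;1,b)$ actually refers to a one-paired circulant in the sense of the paper's definition (which requires the product of the two parameters to divide the number of vertices). Once that is settled, steps (2)–(4) are a formal substitution. I would present the whole argument in two or three sentences: state the divisibility reduction, quote Corollary~\ref{substructure} at $n/a$, and then rewrite Theorem~\ref{structure}.
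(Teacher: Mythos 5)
Your proposal is correct and matches the paper's (implicit) argument exactly: the paper states this corollary as an immediate consequence of ``re-combining'' Theorem~\ref{structure} with Corollary~\ref{substructure}, which is precisely your substitution of $\bigvee_{j=1}^b \overline{K_{n/(ab)}} = C(n/a;1,b)$ into the disjoint-union decomposition. Your extra care about the divisibility condition $b \mid n/a$ is a reasonable (if minor) point that the paper leaves unremarked.
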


As a result of Corollary~\ref{structure2}, when exploring one-paired circulants, 
we can focus on the the graphs $C(n;1,b)$. 
We can determine the clique number of a one-paired circulant.  In 
fact, we have a more direct proof of \cite[Theorem 4]{BGM} that 
every one-paired circulant is CIS.

\begin{corollary}\label{omega}
Let $G = C(n; a, b)$ be a one-paired circulant with $\alpha=\alpha(G)$.
Then $\omega(G) = b$, $n=\alpha b$, and $G$ is a CIS graph.
Furthermore, $G$ is well-covered.
\end{corollary}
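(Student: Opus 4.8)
The plan is to read everything off from the structural description in Theorem~\ref{structure} together with the characterization of CIS circulants recalled above. Write $k = \frac{n}{ab}$ (a positive integer, since $ab \mid n$), so that by Theorem~\ref{structure} the graph $G$ is a disjoint union of $a$ copies of the complete multipartite graph $H := \bigvee_{j=1}^{b} \overline{K_k}$, and $\alpha(G) = \frac{n}{b} = ak$. Since $n = kab = (ak)b = \alpha(G)\, b$, the identity $n = \alpha b$ is immediate, and the equality $\alpha(G)\omega(G) = |V(G)|$ required by the CIS criterion will follow once we establish $\omega(G) = b$.

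Next I would analyze the cliques and independent sets of $H$. A clique of $H$ meets each of the $b$ parts $\overline{K_k}$ in at most one vertex, and conversely any transversal of the $b$ parts is a clique; hence the maximal cliques of $H$ are precisely the transversals, all of size $b$. Because $G$ has no edges between distinct copies of $H$, every clique of $G$ is contained in a single copy, so $\omega(G) = \omega(H) = b$ and every maximal clique of $G$ has size $b$. Dually, a maximal independent set of $H$ is one of its $b$ parts, of size $k$; and a maximal independent set of $G$ is obtained by choosing a maximal independent set in each of the $a$ copies, so every maximal independent set of $G$ has size $ak = \frac{n}{b} = \alpha(G)$. In particular $G$ is well-covered.

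Combining these facts: every maximal independent set of $G$ has size $\alpha(G)$, every maximal clique has size $\omega(G) = b$, and $\alpha(G)\omega(G) = \frac{n}{b}\cdot b = n = |V(G)|$. By the classification of CIS circulants from \cite{BGM} recalled above, $G$ is CIS; well-coveredness can be read off either from the computation of maximal independent sets just given or from \cite[Theorem~3]{BGM}. There is no substantial obstacle here: the only points needing a little care are the degenerate cases (e.g. $b=1$, where $S=\emptyset$ and $G = \overline{K_n}$ with $\omega(G)=1$, or $k=1$), and the observation—used twice—that the disjoint-union structure confines cliques to a single component while independent sets spread across all $a$ components.
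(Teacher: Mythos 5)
Your proof is correct, and it follows the same basic route as the paper: everything is read off from the decomposition $G = \bigcup_{i=1}^a \bigl(\bigvee_{j=1}^b \overline{K_{n/ab}}\bigr)$ of Theorem~\ref{structure}, giving $\omega(G)=b$ (cliques are transversals of the parts of a single component), $n=\alpha b$, and well-coveredness (every maximal independent set is a union of one full part from each component). The one place you diverge is the final step: you conclude that $G$ is CIS by invoking the Boros--Gurvich--Milani\v{c} classification ($\alpha\omega=|V|$ plus all maximal cliques of size $\omega$ and all maximal independent sets of size $\alpha$), whereas the paper --- whose stated purpose for this corollary is to give a \emph{more direct} proof of the CIS property --- verifies the intersection condition outright: a maximal independent set contains an entire part of each component, and a maximal clique picks one vertex from every part of some component, so the two must meet. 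Your structural analysis already contains everything needed for that direct verification, so you could drop the appeal to the classification theorem with no extra work; as written, your argument is logically fine but leans on the heavy external result that the paper is deliberately trying to avoid here.
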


\begin{proof}
By Corollary~\ref{structure2}, a maximum clique in $G$ must be
a maximum clique in $C(\frac{n}{a};1,b)$. By Corollary~\ref{substructure}, it
follows that $\omega(G)=b$. Thus by Theorem~\ref{structure}, $n=\alpha \omega$. It also follows from the structure
of $G$ that each maximal independent set intersects each clique of size $\omega$,
so $G$ is a CIS graph.
In addition, these independent sets all have the same size, i.e., $G$ is well-covered.
\end{proof}

We can also determine the $f$-vector of the independence complex 
${\rm Ind}(C(n;1,b))$ (equivalently,
the independence polynomial of $C(n;1,b)$) directly 
from the structural description.

\begin{theorem}\label{fcirc}
Let $G = C(n;1,b) = C(mb,1,b)$.  Then the $f$-vector of ${\rm Ind}(G)$ is 
\[f({\rm Ind}(G)) = \left(1,\binom{m}{1}b,\binom{m}{2}b,\ldots,\binom{m}{m-1}b,b\right).\]
\end{theorem}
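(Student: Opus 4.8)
The plan is to read the $f$-vector straight off the structural description of one-paired circulants. Since $n = mb$, Corollary~\ref{substructure} identifies $G = C(mb;1,b)$ with the join $\bigvee_{j=1}^{b}\overline{K_m}$ of $b$ pairwise-disjoint edgeless graphs on $m$ vertices each, so the whole problem reduces to understanding the independence complex of such a join.

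First I would record the general principle that taking independence complexes carries joins of graphs to unions of complexes. If $H = H_1 \vee H_2$ on disjoint vertex sets, then any vertex of $H_1$ is adjacent in $H$ to any vertex of $H_2$, so a nonempty independent set of $H$ must avoid one of the two parts entirely; hence ${\rm Ind}(H) = {\rm Ind}(H_1) \cup {\rm Ind}(H_2)$, and the two pieces meet only in the face $\emptyset$. Iterating, ${\rm Ind}(G) = \bigcup_{j=1}^{b} {\rm Ind}(\overline{K_m})$ is a union of $b$ copies of the full $(m-1)$-simplex, on pairwise-disjoint vertex sets, glued along the empty face.

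Next I would simply count faces by dimension. The only face lying in more than one of the $b$ simplices is $\emptyset$, so $f_{-1} = 1$; and for each $i \ge 0$, every $i$-dimensional face of ${\rm Ind}(G)$ lies in exactly one simplex, each of which contributes $\binom{m}{i+1}$ faces of dimension $i$. Thus $f_i = b\binom{m}{i+1}$ for $0 \le i \le m-1$, and $f_i = 0$ for $i \ge m$, giving $f({\rm Ind}(G)) = \bigl(1, b\binom{m}{1}, b\binom{m}{2}, \ldots, b\binom{m}{m-1}, b\bigr)$ as claimed. Equivalently, one could run the same computation through independence polynomials, using $I(H_1 \vee H_2, x) = I(H_1,x) + I(H_2,x) - 1$ to get $I(G,x) = b(1+x)^m - (b-1) = 1 + b\sum_{i \ge 1}\binom{m}{i}x^i$, whose coefficient of $x^i$ is $f_{i-1}$.

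There is no real obstacle: once Corollary~\ref{substructure} is available, this is a short bookkeeping argument. The only points that merit an explicit line are the claim that a nonempty independent set of a join lies within a single factor (so that ${\rm Ind}$ sends $\vee$ to $\cup$) and the observation that the $b$ simplices overlap only in $\emptyset$, so that no positive-dimensional face is double-counted.
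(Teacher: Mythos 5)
Your proposal is correct and follows essentially the same route as the paper: both invoke Corollary~\ref{substructure} to write $G$ as $\bigvee_{j=1}^{b}\overline{K_m}$, observe that every nonempty independent set lies entirely within a single $\overline{K_m}$, and count $b\binom{m}{k}$ independent sets of size $k$. Your explicit remark that the $b$ simplices meet only in $\emptyset$ (so nothing is double-counted) is a nice touch of rigor that the paper leaves implicit, but the argument is the same.
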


\begin{proof}
This will be a counting argument based upon our description of the graph $C(n;1,b)$.
Suppose $G = C(mb,1,b)$. By Corollary~\ref{substructure},
$G = \overline{\bigcup_{j=1}^b K_m}$.
(Note that $\overline{\bigcup_{j=1}^b K_m} = \bigvee_{j=1}^b \overline{K_m}$.)
Hence, any non-adjacency is found within a $\overline{K_m}$.
Thus, for each independent set of size $k$, there will be $\binom{m}{k}$ 
choices of vertices in each of the $b$ copies of $\overline{K_m}$.
Hence, there are $\binom{m}{k}b$ independence sets of size $k$.
Thus, $f({\rm Ind}(G)) = (1,\binom{m}{1}b,\binom{m}{2}b,\ldots,\binom{m}{m-1}b,b)$.
\end{proof}

\begin{theorem}\label{one-pairedspecial}
Let $G$ be the one-paired circulant $G = C(mb;1,b)$.  Then $G$ is Buchsbaum.  
Furthermore, $G$ is vertex decomposable/shellable/Cohen-Macaulay if and only if $m=1$.
\end{theorem}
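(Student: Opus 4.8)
The plan is to handle the two claims — Buchsbaum always, and vertex decomposable/shellable/Cohen-Macaulay iff $m=1$ — by exploiting the explicit join description $G = C(mb;1,b) = \bigvee_{j=1}^b \overline{K_m}$ from Corollary~\ref{substructure}, together with the $f$-vector computation in Theorem~\ref{fcirc}. First I would observe that the independence complex of a join of graphs is the join of the independence complexes: since $G$ is a join of the $b$ empty graphs $\overline{K_m}$, we get ${\rm Ind}(G) = {\rm Ind}(\overline{K_m}) \star \cdots \star {\rm Ind}(\overline{K_m})$ ($b$ factors), and ${\rm Ind}(\overline{K_m})$ is the full $(m-1)$-simplex on $m$ vertices. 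So ${\rm Ind}(G)$ is a join of $b$ simplices, which means it is itself a simplex — wait, that is wrong, because in a join we take $F \cup E$ with $F,E$ ranging over the two complexes, and the join of two simplices (on disjoint vertex sets) IS a simplex. That would make $G$ a complete multipartite graph... which it is: $\bigvee_{j=1}^b \overline{K_m}$ is exactly the complete $b$-partite graph $K_{m,m,\ldots,m}$. Its independence complex has facets = the $b$ parts, each of size $m$, and these are the only maximal independent sets. So I need to be careful: the join of the complexes in the graph-join sense corresponds to the \emph{union} of edge sets plus all cross edges, so independent sets of $G$ are independent sets contained entirely in one part. Thus ${\rm Ind}(G)$ is the disjoint union (as a complex, glued only along $\emptyset$) of $b$ disjoint $(m-1)$-simplices.

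With that corrected structural picture, the two statements follow cleanly. For the $m=1$ case, each part is a single vertex, so $G = K_b$, and ${\rm Ind}(K_b)$ is $b$ isolated points, which is $0$-dimensional, hence vertex decomposable by Theorem~\ref{facts}$(ii)$; this handles the forward direction of the ``iff''. For the Buchsbaum claim in general: take any vertex $x$; ${\rm link}_{{\rm Ind}(G)}(x)$ consists of the faces of ${\rm Ind}(G)$ containing $x$, with $x$ removed — but $x$ lies in exactly one part (of size $m$), so its link is the $(m-2)$-simplex on the remaining $m-1$ vertices of that part, which is a simplex and hence Cohen-Macaulay. Therefore $G$ is Buchsbaum by the definition, or by Lemma~\ref{circulantlemma}$(i)$ after noting all links are isomorphic. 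For the reverse direction of the ``iff'' (i.e. $m \geq 2$ forces non-Cohen-Macaulayness): when $m \geq 2$, $\dim {\rm Ind}(G) = m-1 \geq 1$, and ${\rm Ind}(G)$ is disconnected (it has $b \geq 2$ connected components, one per part). If $m \geq 3$ this violates Theorem~\ref{facts}$(iv)$; if $m = 2$ it is disconnected of dimension $1$, so by Theorem~\ref{facts}$(iii)$ it is Buchsbaum but not Cohen-Macaulay. Either way $G$ is not Cohen-Macaulay, hence not shellable and not vertex decomposable.

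I expect no serious obstacle here — the entire argument rests on correctly reading off the structure $G = K_{m,\ldots,m}$ from Corollary~\ref{substructure} and recognizing that ${\rm Ind}(K_{m,\ldots,m})$ is a disjoint union of simplices. The one point that needs a half-sentence of care is making sure $b \geq 2$ throughout the non-trivial direction: if $b = 1$ then $C(n;1,b)$ would have empty connection set and $G = \overline{K_m}$ with ${\rm Ind}(G)$ a single simplex, which is vertex decomposable regardless of $m$; but the definition of one-paired circulant with $ab \mid n$ and $S = \{d : a \mid d, ab \nmid d\}$ forces $b \geq 2$ for $S$ to be the intended set (and the statement is only interesting for $b \geq 2$), so I would note that we may assume $b \geq 2$, or alternatively observe that $m=1, b=1$ gives the trivial graph on one vertex which is also vertex decomposable, so the biconditional still reads correctly. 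I could also cross-check the dimension count against Theorem~\ref{fcirc}: the $f$-vector $(1, mb, \binom{m}{2}b, \ldots, b)$ has top entry in degree $m-1$, confirming $\dim {\rm Ind}(G) = m-1$, and has alternating-sign $h$-vector when $m \geq 2$, giving an independent confirmation via Theorem~\ref{facts}$(v)$ that $G$ is not Cohen-Macaulay for $m \geq 2$.
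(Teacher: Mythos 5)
Your proof is correct and follows essentially the same route as the paper: both identify ${\rm Ind}(G)$ as a disjoint union of $b$ simplices of dimension $m-1$ (you via the complete multipartite description $K_{m,\ldots,m}$ from Corollary~\ref{substructure}, the paper by writing the facets $\{j, b+j, \ldots, (m-1)b+j\}$ explicitly), then observe that each vertex link is a simplex to get Buchsbaum, and use disconnectedness plus Theorem~\ref{facts}$(iii)$--$(iv)$ to rule out Cohen--Macaulayness when $m\geq 2$. Your explicit attention to the degenerate case $b=1$ (where the disconnectedness argument would fail) is a point the paper leaves implicit, and is worth the half-sentence you give it.
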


\begin{proof}
Suppose $G = C(mb,1,b)$.  By Theorem \ref{fcirc}, $\dim {\rm Ind}(G) = m-1$
with $b$ facets.  So, 
\[\{\{0+j,b+j,2b+j,\ldots,(m-1)b+j \} ~|~ j=0,\ldots,b-1\}\]
is a complete list of the facets of ${\rm Ind}(G)$, where addition is modulo $n$.

Since $0$ only appears in the facet $\{0,b,2b,\ldots,(m-1)b\}$, 
${\rm link}_{{\rm Ind}(G)}(0)$ is the simplex $\langle \{b,2b,\ldots,(m-1)b\} \rangle$,
and thus the link is vertex decomposable. So $G$ is Buchsbaum
by Lemma \ref{circulantlemma}.

If $m > 1$, then the facets of ${\rm Ind}(G)$ are disjoint, and thus
${\rm Ind}(G)$ is not connected.  As a consequence,
$G$ is not vertex decomposable, shellable, or Cohen-Macaulay
by Theorem \ref{facts} $(iii)$ and $(iv)$.  However, If $m=1$,
then ${\rm Ind}(G)$ has dimension $0$, and so is vertex decomposable, shellable,
and Cohen-Macaulay by Theorem \ref{facts} $(ii)$.
\end{proof}

\begin{corollary}
Let $G$ be the one-paired circulant $G = C(n;a,b)$. 
\begin{enumerate}
\item[$(i)$] $G$ is vertex decomposable/shellable/Cohen-Macaulay
if and only if $n=ab$.  
\item[$(ii)$] $G$ is Buchsbaum but not Cohen-Macaulay if and only if $a=1$ and $ab <n$.
\item[$(iii)$] ${\rm Ind}(G)$ is pure but not Buchsbaum if and only if $1 < a$ and $ab <n$. 
\end{enumerate}
\end{corollary}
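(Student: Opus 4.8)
The plan is to deduce the corollary from Theorem~\ref{one-pairedspecial} together with Corollary~\ref{structure2}, which expresses $C(n;a,b)$ as a disjoint union of $a$ copies of $C(n/a;1,b)$. Writing $n/a = mb$ (so that $m = n/(ab)$), each component is $C(mb;1,b)$, and the condition $n=ab$ is exactly the condition $m=1$. First I would handle part $(i)$: if $n=ab$, then $m=1$ and each component $C(mb;1,b)$ is vertex decomposable/shellable/Cohen-Macaulay by Theorem~\ref{one-pairedspecial}; by Lemma~\ref{connected} the disjoint union $G$ inherits all three properties. Conversely, if $n>ab$, then $m>1$, so each component fails to be Cohen-Macaulay (hence fails all three) by Theorem~\ref{one-pairedspecial}; since $\mathrm{Ind}(G)$ is the join of the component complexes, and a join is Cohen-Macaulay only if each factor is (as used in the proof of Lemma~\ref{buchsbaumunion}, via \cite[Proposition 5.3.16]{Vbook}), $G$ is not Cohen-Macaulay, and not vertex decomposable or shellable by Theorem~\ref{facts}$(i)$.

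Next I would do part $(ii)$. If $a=1$ and $ab<n$, then $G = C(n;1,b) = C(mb;1,b)$ with $m>1$, which is exactly the Buchsbaum-but-not-Cohen-Macaulay case of Theorem~\ref{one-pairedspecial}. Conversely, suppose $G$ is Buchsbaum but not Cohen-Macaulay. If $a>1$, then $G$ is a disjoint union of $a\geq 2$ copies of $C(n/a;1,b)$, each of which is Buchsbaum by Theorem~\ref{one-pairedspecial}; if additionally $n>ab$ these copies are not Cohen-Macaulay, so by Lemma~\ref{buchsbaumunion} $G$ is not Buchsbaum — contradiction; and if $n=ab$ then by part $(i)$ $G$ is Cohen-Macaulay — also a contradiction. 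Hence $a=1$. Finally $ab<n$ must hold, since otherwise $n=ab$ and $G$ is Cohen-Macaulay by part $(i)$.

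For part $(iii)$, note first that $\mathrm{Ind}(G)$ is always pure because $G$ is well-covered (Corollary~\ref{omega}). If $1<a$ and $ab<n$: then $G$ is a union of $a\geq 2$ disjoint copies of $C(n/a;1,b)$ with $m = n/(ab)>1$; each copy is Buchsbaum but not Cohen-Macaulay by Theorem~\ref{one-pairedspecial}, so Lemma~\ref{buchsbaumunion} gives that $G$ is not Buchsbaum. Conversely, if $G$ is pure but not Buchsbaum, then by $(i)$ we cannot have $n=ab$ (else $G$ is Cohen-Macaulay, hence Buchsbaum), so $ab<n$; and by $(ii)$ we cannot have $a=1$ with $ab<n$ (that case is Buchsbaum), so $a>1$. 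Thus $1<a$ and $ab<n$, completing the proof.

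The only real subtlety — and the step I would be most careful with — is the converse direction of $(ii)$: one must check that no configuration of parameters other than $a=1,\,ab<n$ can produce a graph that is Buchsbaum but not Cohen-Macaulay, and this requires invoking both part $(i)$ (to rule out $n=ab$) and Lemma~\ref{buchsbaumunion} (to rule out $a>1$ with $n>ab$). Everything else is a clean bookkeeping translation between the parameter $m=n/(ab)$ of Theorem~\ref{one-pairedspecial} and the trichotomy $n=ab$, $a=1<n/(ab)$, $a>1$ together with $n>ab$, using only Lemma~\ref{connected}, Lemma~\ref{buchsbaumunion}, and Theorem~\ref{facts}.
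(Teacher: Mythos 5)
Your proposal is correct and follows essentially the same route as the paper: decompose $C(n;a,b)$ into $a$ disjoint copies of $C(n/a;1,b)$ via Corollary~\ref{structure2}, then apply Theorem~\ref{one-pairedspecial} together with Lemma~\ref{connected} and Lemma~\ref{buchsbaumunion}. If anything, you are slightly more careful than the paper in the converse direction of $(i)$ (justifying the ``only if'' via the join criterion rather than reading Lemma~\ref{connected} as an equivalence) and in explicitly ruling out the case $a>1$, $n=ab$ in the converse of $(ii)$.
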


\begin{proof}
$(i)$ By Corollary \ref{structure2}, $C(n;a,b)$ is the disjoint union of $a$ copies of 
$C(\frac{n}{a};1,b)$.
By Lemma \ref{connected} $C(n;a,b)$ will be vertex decomposable, shellable, or Cohen-Macaulay
if and only if each connected component  $C\left(\frac{n}{a};1,b\right)$ has this property.
But by Theorem \ref{one-pairedspecial},  $C\left(\frac{n}{a};1,b\right)$ has these properties 
if and only if $\frac{n}{a} = b$, i.e., $n = ab$. 

We now prove $(ii)$ and $(iii)$.  Note that ${\rm Ind}(G)$ is pure by Corollary \ref{omega}, so it suffices
to show that ${\rm Ind}(G)$ is Buchsbaum, but not Cohen-Macaulay, if and only if $a =1$.
If $a = 1$, then $G = C(n;a,b) = C(n;1,b)$.  By Theorem \ref{one-pairedspecial}, $G$ is Buchsbaum.
In addition, if $b <n$, $G$ cannot be Cohen-Macaulay.   

Now suppose that $a \geq 2$ and $ab < n$. By Theorem \ref{one-pairedspecial} 
and Corollary \ref{structure2}, $C(n;a,b)$ is the disjoint union of $a \geq 2$
Buchsbaum graphs that are not Cohen-Macaulay.  
Then by Lemma \ref{buchsbaumunion}, $C(n;a,b)$ cannot be Buchsbaum.
\end{proof}

\begin{example}
The graph $C_8(2)$ equals the one-paired circulant $C(8;2,2)$.  Since $2\cdot 2 < 8$ and $1<2$,
we have that $C_8(2)$ is pure but not Buchsbaum by the above result.  This was already noted in
Example \ref{example1}.
\end{example}

\section{Minimal examples}

Recall that the following implications always hold for 
pure simplicial complexes:
\[\mbox{vertex decomposable} \Rightarrow \mbox{shellable} \Rightarrow
\mbox{Cohen-Macaulay} \Rightarrow \mbox{Buchsbaum}. \]
For many families of well-covered graphs, the reverse implications also hold,
for example, see \cite{DE,HHZ,W} for chordal graphs and \cite{EV,VT} for bipartite graphs.
However, if we restrict to well-covered circulants, the reverse implications
may fail to hold.  

Example \ref{example1} already gives an example of a well-covered circulant that
is not Buchsbaum.  As we detail in the next section, we used computer algebra systems
to determine all well-covered circulants on $n\leq 16$ vertices (see
Table \ref{table}).  
Our computer search found the following
vertex minimal counter-examples to the reverse implications.

\begin{theorem}\label{minex}
\begin{enumerate}
\item[$(i)$] The disconnected graph $C_8(2)$ is the
smallest well-covered circulant that is not Buchsbaum.
The well-covered circulant $C_{10}(1,4)$ is the smallest connected 
well-covered circulant that is not Buchsbaum.
\item[$(ii)$] The graph $C_4(1)$ is the smallest well-covered circulant 
that is Buchsbaum but not Cohen-Macaulay.
\item[$(iii)$] The graph $C_{16}(1,4,8)$ is the smallest well-covered circulant 
that is shellable but not vertex decomposable.
\end{enumerate}
\end{theorem}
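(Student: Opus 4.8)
For part~$(iii)$ the strategy has two halves --- verifying the three named graphs sit where the theorem claims, and ruling out anything smaller --- and I would get the ``anything smaller'' half, for all three parts, directly from the exhaustive list of well-covered circulants on at most $16$ vertices recorded in Table~\ref{table}. Part~$(ii)$ is then Example~\ref{example1} (which shows $C_4(1)$ is Buchsbaum but not Cohen--Macaulay, while Table~\ref{table} shows nothing smaller is well-covered-but-not-Cohen--Macaulay). For part~$(i)$, $C_8(2)$ is again Example~\ref{example1}; for $C_{10}(1,4)$ I would show it is not Buchsbaum by computing ${\rm link}_{{\rm Ind}(C_{10}(1,4))}(0) = {\rm Ind}\!\left(C_{10}(1,4)\setminus N[0]\right)$: the graph $C_{10}(1,4)\setminus N[0]$ is a $4$-cycle together with one isolated vertex, so this link is two $2$-simplices sharing a single vertex, and its link at that vertex is a pair of disjoint edges --- a disconnected $1$-dimensional complex, hence not Cohen--Macaulay by Theorem~\ref{facts}$(iii)$. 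Thus $C_{10}(1,4)$, which is connected because $1 \in S$, is not Buchsbaum, and by Table~\ref{table} no connected well-covered circulant on fewer vertices has this property.

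For the positive part of $(iii)$, write $G = C_{16}(1,4,8)$; I must show ${\rm Ind}(G)$ is shellable. First I would pin down the facets: one checks $\alpha(G) = 4$, so ${\rm Ind}(G)$ is pure of dimension $3$, and I would list its $f_3$ four-element facets explicitly, grouping them into orbits under the rotation $j \mapsto j+1$ and the reflection $j \mapsto -j$ of $G$. Shellability would then be proved by exhibiting a concrete linear order on these facets and checking the shelling condition from Section~\ref{sec:background} term by term; I would first test the (reverse) lexicographic order on the $4$-subsets of $\{0,\dots,15\}$ and, if it fails, patch it using the orbit representatives. This is a finite, if slightly tedious, verification.

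The real content is that ${\rm Ind}(G)$ is \emph{not} vertex decomposable. By vertex-transitivity of the circulant, ${\rm link}_{{\rm Ind}(G)}(i) \cong {\rm link}_{{\rm Ind}(G)}(0)$ and ${\rm del}_{{\rm Ind}(G)}(i) \cong {\rm del}_{{\rm Ind}(G)}(0)$ for every vertex $i$; hence ${\rm Ind}(G)$, which is not a simplex, is vertex decomposable if and only if both ${\rm link}_{{\rm Ind}(G)}(0) = {\rm Ind}(G\setminus N[0])$ (a complex on $10$ vertices) and ${\rm del}_{{\rm Ind}(G)}(0) = {\rm Ind}(G\setminus 0)$ (a complex on $15$ vertices) are vertex decomposable. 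It therefore suffices to show that one of these two complexes is not vertex decomposable, which I would do by running the vertex-decomposition recursion and checking that no choice of vertex at the top level succeeds. I expect this to be the main obstacle: vertex decomposability, unlike Cohen--Macaulayness, is not passed to induced subcomplexes --- the ``house'' ($C_5$ with one chord) is vertex decomposable yet one of its vertex deletions is the non-Cohen--Macaulay complex ${\rm Ind}(C_4)$ --- so no cheap homological or connectivity obstruction is available, and one is forced into a genuine finite case analysis (this is exactly the step performed by the computer search; the proof would either display the resulting tree of failed choices or point to that verification, cf.\ Remark~\ref{vdnotshellable}). Finally, the minimality in $(iii)$ --- every shellable well-covered circulant on at most $15$ vertices, and every well-covered $C_{16}(S)$ preceding $C_{16}(1,4,8)$, is vertex decomposable --- is read off from Table~\ref{table}.
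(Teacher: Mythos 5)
Your treatment of parts $(i)$ and $(ii)$, and of all the minimality claims via Table~\ref{table}, matches the paper. For $C_{10}(1,4)$ your argument is a correct alternative to the paper's: you identify ${\rm link}_{{\rm Ind}(G)}(0)$ as two $2$-simplices glued at the vertex $5$ and kill Cohen--Macaulayness via the disconnected link of $5$ (Reisner), whereas the paper computes the $f$-vector $(1,5,6,2)$ of the same complex and observes the $h$-vector $(1,2,-1,0)$ has a negative entry; both are fine and reach the same complex $\langle\{2,5,7\},\{3,5,8\}\rangle$.

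The problem is in part $(iii)$, and specifically in your assertion that ``no cheap homological or connectivity obstruction is available'' to rule out vertex decomposability, forcing a brute-force recursion. That assertion is false, and the cheap obstruction is exactly the paper's proof: ${\rm del}_{{\rm Ind}(G)}(0)={\rm Ind}(G\setminus 0)$ has $f$-vector $(1,15,70,117,60)$, hence $h$-vector $(1,11,31,18,-1)$, so by Theorem~\ref{facts}$(v)$ it is not Cohen--Macaulay, hence not shellable, hence not vertex decomposable; by vertex-transitivity no vertex of $G$ has a vertex decomposable deletion, so $G$ is not vertex decomposable. (Your reduction to the single vertex $0$ is correct; you just stopped one step short of noticing that VD $\Rightarrow$ CM $\Rightarrow$ nonnegative $h$-vector applies to the deletion itself, so no recursion is needed. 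Your remark that VD is not inherited by induced subcomplexes is true but irrelevant here: the point is not inheritance but that \emph{every} candidate deletion fails a necessary condition.) Your proposed exhaustive search would eventually succeed, so this is a detour rather than a fatal error, but it misses the actual content of the proof and, incidentally, the observation (footnoted in Remark~\ref{vdnotshellable}) that this makes $C_{16}(1,4,8)$ a characteristic-free counterexample to Villarreal's conjecture. Separately, for shellability your plan (``try reverse lex and patch it if it fails'') is not yet a proof: as in the paper, you must actually exhibit a linear order on the $80$ facets and verify the shelling condition; the paper does this by listing an explicit order.
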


\begin{proof}
The minimality is a result of our computer search.  Example \ref{example1} showed that $C_4(1)$
is Buchsbaum but not Cohen-Macaulay and $C_8(2)$ is not Buchsbaum.  There are only two facets
of $\Delta = {\rm Ind}(C_{10}(1,4))$ that contain $0$, namely $\{0,3,5,8\}$ and $\{0,2,5,7\}$.  So
${\rm link}_{\Delta}(0) = \langle \{3,5,8\},\{2,5,7\} \rangle$.  But then the link
has $f$-vector $(1,5,6,2)$ and $h$-vector $(1,2,-1,0)$, so the link is not Cohen-Macaulay,
and thus, $C_{10}(1,4)$ is not Buchsbaum.

For $G = C_{16}(1,4,8)$ (see Figure \ref{c16148}), 
\begin{figure}[h]
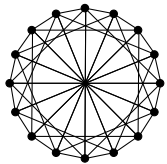

\[\Circulant{16}{1,4,8}\]
\caption{The circulant graph $C_{16}(1,4,8)$.}
\label{c16148}
\end{figure}
the deletion ${\rm del}_{{\rm Ind}(G)}(0)$ has $f$-vector 
$(1,15,70,117,60)$, and thus,
$h$-vector $(1, 11, 31, 18, -1)$.  So, by Theorem \ref{facts} $(v)$, the deletion is not
Cohen-Macaulay, so the deletion cannot be vertex decomposable.  Because of the 
symmetry of the graph, $G$ has no vertex $x$ such that ${\rm del}_{{\rm Ind}(G)}(x)$ is vertex decomposable,
and so $G$ cannot be vertex decomposable.  The simplicial complex ${\rm Ind}(G)$ has $80$
facets.  For completeness, here is one shelling order read left to right (the independence complex has dimension
three, so each group of four integers represents a facet):
\scriptsize
\begin{center}
\begin{verbatim}
8 10 13 15   4 10 13 15   6 8 13 15   2 8 13 15   4 6 13 15   2 4 13 15   5 8 10 15   
5 10 12 15   1 10 12 15   6 9 12 15   1 6 12 15   1 8 10 15   1 4 10 15   4 6 9 15   
2 4 9 15     2 9 12 15    2 5 12 15   1 6 8 15    2 5 8 15    1 4 6 15    6 8 11 13   
4 9 11 14    7 9 12 14    3 9 12 14   0 9 11 14   4 7 9 14    0 7 9 14    0 3 9 14   
2 8 11 13    2 7 9 12     2 4 7 13    4 7 10 13   5 7 10 12   5 7 12 14   3 5 12 14 
0 5 7 14     0 5 11 14    5 8 11 14   3 5 8 14    0 3 5 14    1 7 10 12   1 7 12 14   
1 3 12 14    1 8 11 14    1 3 8 14    3 5 10 12   3 8 10 13   3 6 8 13    1 3 10 12   
2 5 7 12     0 6 9 11     0 6 11 13   0 2 11 13   0 7 10 13   0 2 7 13    4 6 9 11   
4 6 11 13    2 4 11 13    2 4 9 11    0 2 9 11    1 6 8 11    2 5 8 11    1 4 6 11   
1 4 11 14    1 4 7 14     0 2 5 11    3 5 8 10    1 3 8 10    0 5 7 10    1 4 7 10   
0 3 5 10     0 3 10 13    0 3 6 13    2 4 7 9     0 2 7 9     0 3 6 9     3 6 9 12 
1 3 6 12     1 3 6 8      0 2 5 7
\end{verbatim}
\end{center}
\normalsize
\end{proof}

\begin{remark}\label{vdnotshellable}
To the best of our knowledge, $C_{16}(1,4,8)$ is the first example
of an independence complex that is shellable but not vertex decomposable.
Provan and Billera's original paper on vertex decomposability (see \cite{PB}) points
out that Walkup's example (see \cite{W1978}) of a simplicial complex on 56 vertices
and over 8000 27-simplicies is an example of a shellable but not vertex decomposable simplicial
complex.  (Provan and Billera proved that any vertex decomposable simplicial complex satisfies
Hirsh's conjecture, while Walkup's example was a counter-example to this example.)
The short note of  Moriyama and  Takeuchi \cite{MT} contains a list of the minimal
two dimensional simplicial complexes that are vertex decomposable, but not shellable.  We
checked these complexes, but none of them are the independence complex of a graph.
Our example is also interesting for the following reason.  It is known from \cite{PB}
that the barycentric subdivision of any shellable simplicial complex is vertex decomposable.
So, the independence complex of $C_{16}(1,4,8)$ cannot be constructed by taking the
barycentric subdivision of a shellable simplicial complex.\footnote[1]{After submitting this
paper, we observed that $C_{16}(1,4,8)$ also gives a negative answer to
\cite[Conjecture 2]{V}.  R. Villarreal conjectured that every Cohen-Macaulay
graph $G$ has a vertex $x$ such that $G \setminus x$ is also Cohen-Macaulay.  It
was already known that this conjecture is false due to an example of Terai
\cite[Exercise 6.2.24]{Vbook}.  However
Terai's example does not hold for all characteristics.  Our example works
in all characteristics.  To see why, note that $C_{16}(1,4,8)$ is shellable, so it is
Cohen-Macaulay over any field.  As we observed in the proof of
Theorem \ref{minex}, for
any vertex $x$ of $C_{16}(1,4,8)$, the $h$-vector of the independence complex of 
$C_{16}(1,4,8) \setminus x$, which only depends upon the combinatorics of the complex,
implies that $C_{16}(1,4,8) \setminus x$ is not Cohen-Macaulay.
}

\end{remark}

\begin{remark}
The {\it lexicographical product} of the graphs $G$ and $H$,
denoted $G[H]$, is the graph with the vertex set $V_G \times V_H$ and
where $(w,x)$ and $(y,z)$ are adjacent if $\{w,y\} \in E_G$ or if $w=y$, then
$\{x,z\} \in E_H$.  In a forthcoming paper, we will explore how the topological properties
(e.g., vertex decomposable, shellable)
of ${\rm Ind}(G)$ and ${\rm Ind}(H)$ are preserved under the lexicographical product.  As an application
we will use the graph $C_{16}(1,4,8)$ to build an infinite family of graphs
which are shellable but not vertex decomposable.
\end{remark}

The reader will notice that there is no example of a 
well-covered circulant that is Cohen-Macaulay
but not shellable.    We know of no such a graph, so we leave it as a question:

\begin{question}
Is there a well-covered circulant that is Cohen-Macaulay but not shellable?
\end{question}

In fact, we are not aware of any graph whose independence complex is Cohen-Macaulay but
not shellable.


\section{Additional Observations and Computations}

Using {\it Macaulay2} \cite{Mt} and Sage \cite{Sage}
we determined all well-covered circulants on $n \leq 16$ vertices,
and determined their combinatorial topological properties.  This information
is collated in Table \ref{table} at the end of the section.  We record a number of observations about the 
independence complex of well-covered circulants, some of which are based upon
our computer experiments.

\subsection{$1$-well-covered circulants}

A well-covered graph $G$ is said to be {\it $1$-well-covered} if $G \setminus \{x\}$,
the graph with the vertex $x$ and of its adjacent edges removed,
is a well-covered graph for all vertices $x\in V$.  This notion
was introduced by Staples \cite{S}.  In \cite[Theorem 3.3]{M}, Moussi
determined which of the well-covered circulants of the form
$G=C_n(1,2,\ldots,d)$ were also $1$-well-covered.  

What is
striking about \cite[Theorem 3.3]{M} is that the class of $1$-well-covered circulants
of the form $G = C_n(1,2,\ldots,d)$ coincides exactly with those 
that are vertex decomposable, as first found in \cite[Theorem 3.4]{VVW}.
This observation suggests some connection between the two concepts.  Indeed, 
vertex decomposability implies $1$-well-coveredness:

\begin{theorem}\label{vdwellcovered}
Let $G$ be a circulant graph.  If $G$ is vertex decomposable,
then $G$ is $1$-well-covered.
\end{theorem}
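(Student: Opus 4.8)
The statement to prove is: if a circulant graph $G$ is vertex decomposable, then $G$ is $1$-well-covered, i.e. $G \setminus \{x\}$ is well-covered for every vertex $x$. The plan is to work at the level of independence complexes and exploit two facts: first, that deleting a vertex $x$ from a graph corresponds to taking the deletion $\operatorname{del}_{{\rm Ind}(G)}(x)$ of that vertex in the independence complex (since the independent sets of $G \setminus \{x\}$ are exactly the faces of ${\rm Ind}(G)$ not containing $x$); and second, that by vertex transitivity of circulant graphs it suffices to check $1$-well-coveredness at a single vertex, say $x = 0$. So the whole statement reduces to showing: if ${\rm Ind}(G)$ is (pure) vertex decomposable, then $\operatorname{del}_{{\rm Ind}(G)}(0)$ is pure.

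First I would recall, or prove in passing, the known recursion that underlies vertex decomposability: if $\Delta$ is pure vertex decomposable via a shedding vertex $y$, then both $\operatorname{link}_\Delta(y)$ and $\operatorname{del}_\Delta(y)$ are pure vertex decomposable, and moreover purity of $\Delta$ forces $\dim \operatorname{del}_\Delta(y) = \dim\Delta$ and $\dim \operatorname{link}_\Delta(y) = \dim\Delta - 1$, with every facet of $\operatorname{link}_\Delta(y)$ extending (by adding $y$) to a facet of $\Delta$. The key structural input is that a vertex-decomposable complex is in particular \emph{pure}, hence $G$ well-covered; but more is true — every \emph{deletion} $\operatorname{del}_\Delta(v)$ appearing anywhere in the recursive decomposition is itself pure vertex decomposable, hence pure. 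The one subtlety is that the shedding vertex $y$ used to decompose $\Delta$ need not be the vertex $0$ we care about. To handle this, I would argue by induction on the number of vertices of $\Delta$: either $0 = y$ (the shedding vertex), in which case $\operatorname{del}_\Delta(0)$ is pure by the recursion; or $0 \neq y$, in which case $\operatorname{del}_\Delta(0) = \operatorname{del}_{\operatorname{del}_\Delta(y)}(0)$ appropriately interpreted, and since $\operatorname{del}_\Delta(y)$ is a pure vertex-decomposable complex on fewer vertices containing $0$, the inductive hypothesis gives purity of its deletion at $0$. One must be slightly careful that $0$ is still a vertex of $\operatorname{del}_\Delta(y)$ (it is, since $y \neq 0$ and $\{0\}$ is a face not containing $y$), and that "$\operatorname{del}$ of a $\operatorname{del}$" really is what we want — but both are immediate from the set-theoretic definitions.

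Putting it together: by vertex transitivity of the circulant $G = C_n(S)$, $G$ is $1$-well-covered if and only if $G \setminus \{0\}$ is well-covered, i.e. if and only if $\operatorname{del}_{{\rm Ind}(G)}(0)$ is pure. Since ${\rm Ind}(G)$ is vertex decomposable by hypothesis, the purity statement above (applied with $\Delta = {\rm Ind}(G)$, $v = 0$) gives exactly this. I would also remark that $G \setminus \{0\}$ inherits purity of the \emph{correct} dimension: every maximal independent set of $G \setminus \{0\}$ has size $\alpha(G)$ — indeed, any maximal independent set $W$ of $G$ with $0 \notin W$ survives, and any maximal independent set $W \ni 0$ of $G$ gives, after removing $0$, an independent set of $G\setminus\{0\}$ of size $\alpha(G) - 1$ which is \emph{not} maximal precisely because $\operatorname{link}_{{\rm Ind}(G)}(0)$ being pure of dimension $\alpha(G)-2$ lets us extend; care here is what makes the dimension count come out right, but it is forced by the recursion already invoked.

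**The main obstacle.** The only real point of friction is the bookkeeping around the shedding vertex not being the distinguished vertex $0$: one needs the (standard but easy) fact that a deletion of a pure vertex-decomposable complex is again pure vertex decomposable, and then an induction to "walk" the decomposition down until $0$ becomes available, using vertex transitivity only at the very end. Everything else — the translation between $G \setminus \{x\}$ and $\operatorname{del}_{{\rm Ind}(G)}(x)$, and the reduction to a single vertex via symmetry — is formal. I do not expect any genuinely hard step; the proof is short once the recursion for pure vertex decomposability is stated cleanly.
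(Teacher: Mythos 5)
Your framing (identifying $G\setminus\{x\}$ with ${\rm del}_{{\rm Ind}(G)}(x)$, and using vertex transitivity) starts out as in the paper, but the core of your argument rests on a claim that is false: it is \emph{not} true that every vertex deletion of a pure vertex decomposable complex is pure. Take $\Delta=\langle \{a,b\},\{b,c\},\{c,d\}\rangle$, the independence complex of the $4$-vertex path with edges $\{a,c\},\{a,d\},\{b,d\}$. This $\Delta$ is pure, one-dimensional and connected, hence vertex decomposable by Theorem \ref{facts}~$(iii)$ (with shedding vertex $a$); yet ${\rm del}_\Delta(c)=\langle\{a,b\},\{d\}\rangle$ is not pure. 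Equivalently, $P_4$ is vertex decomposable but not $1$-well-covered. Your proposed induction breaks exactly at the identity ${\rm del}_\Delta(0)={\rm del}_{{\rm del}_\Delta(y)}(0)$: the left-hand side contains all faces avoiding $0$, including those that contain the shedding vertex $y$, while the right-hand side discards them. In the example this is precisely the difference between the non-pure $\langle\{a,b\},\{d\}\rangle$ and the pure $\langle\{b\},\{d\}\rangle$, so there is no ``appropriate interpretation'' that repairs the step; the statement you are inducting on is simply false for general pure vertex decomposable complexes.

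The fix is to use the circulant hypothesis where it is actually needed: not merely to reduce the check to the single vertex $0$, but to \emph{relocate the shedding vertex}. The definition of vertex decomposability supplies one vertex $y$ with ${\rm del}_{{\rm Ind}(G)}(y)$ vertex decomposable, hence pure; since the automorphism group of $C_n(S)$ is transitive on vertices, an automorphism carrying $y$ to an arbitrary vertex $x$ induces an isomorphism ${\rm del}_{{\rm Ind}(G)}(y)\cong{\rm del}_{{\rm Ind}(G)}(x)$, so the deletion at every vertex is pure and $G\setminus\{x\}$ is well-covered for all $x$. This is the paper's (two-line) proof. Your attempt to ``walk down the decomposition'' until $0$ appears is both unnecessary and unsalvageable as written; the symmetry does all the work.
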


\begin{proof}
We begin with the straight-forward observation that
for any graph $G$, we have  ${\rm del}_{{\rm Ind}(G)}(x) = {\rm Ind}(G\setminus \{x\})$
for any vertex $x$ of $G$.  
Now, if $G$ is vertex decomposable, then there exists a
vertex $x$ such that ${\rm del}_{{\rm Ind}(G)}(x)$ is vertex decomposable, hence pure.
But by symmetry, all vertices $x$ will have this property.  
By the above observation,  this means that ${\rm Ind}(G\setminus x)$ is
pure, i.e., $G\setminus x$ is well-covered for all $x$.
\end{proof}

The above result allows us to give a new proof for \cite[Theorem 3.3]{M}.

\begin{theorem} 
Let $n$ and $d$ be integers with $n\geq 2d\geq 2$ and let $G=C_n(1,2,\ldots,d)$.
Then  $G$ is $1$-well-covered if and only if $n \leq 3d+2$ and $n \neq 2d+2$.
\end{theorem}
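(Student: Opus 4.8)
The plan is to prove the two implications separately, writing $G=C_n(1,2,\ldots,d)$ throughout. The direction ``$\Leftarrow$'' is the new, short half: if $n\le 3d+2$ and $n\ne 2d+2$ then, by \cite[Theorem 3.4]{VVW}, $G$ is vertex decomposable (these are precisely the values for which $G$ is vertex decomposable; in the boundary cases $n\in\{2d,2d+1\}$ one has $G=K_n$, which is certainly vertex decomposable), and so $G$ is $1$-well-covered by Theorem \ref{vdwellcovered}.

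For ``$\Rightarrow$'' I would argue the contrapositive: assuming $n=2d+2$ or $n\ge 3d+3$, show $G$ is not $1$-well-covered. By vertex-transitivity it is enough to show ${\rm Ind}(G\setminus\{0\})$ is not pure, and the first step is to understand its facets. A maximal independent set of $G\setminus\{0\}$ is either $(a)$ a maximal independent set of $G$ avoiding $0$, or $(b)$ of the form $M\setminus\{0\}$ where $M$ is a maximal independent set of $G$ containing $0$ in which every neighbour of $0$ is adjacent to some vertex of $M\setminus\{0\}$. Since $G$ has an edge, a maximum independent set of $G$ can be cyclically shifted so as to avoid $0$ (not all of its cyclic shifts contain $0$), so kind $(a)$ always provides a facet of size $\alpha(G)$ (and facets of other sizes as well if $G$ is not well-covered); hence ${\rm Ind}(G\setminus\{0\})$ is pure if and only if no facet of kind $(b)$ occurs. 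Regarding an independent set of $C_n(1,\ldots,d)$ as a cyclic gap sequence (circular distances between consecutive chosen vertices), maximality in $G$ means each gap lies in $[d+1,2d+1]$; and by inspecting circular distances near $0$ one sees that $M\ni 0$ yields a facet of kind $(b)$ exactly when the two gaps of $M$ incident to $0$ both equal $d+1$, i.e.\ $M=\{0,\,d+1,\,\ldots,\,n-d-1\}$ with the remaining gaps summing to $n-2d-2$. So a facet of kind $(b)$ exists if and only if $n-2d-2$ is a sum of $\alpha(G)-2$ integers, each lying in $[d+1,2d+1]$.

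With this in hand I would split into the three relevant cases. If $n\ge 3d+3$ and $n\ne 4d+3$, a short counting argument shows $G$ is not even well-covered (for $3d+3\le n\le 4d+2$, $n$ has cyclic gap sequences of lengths $2$ and $3$; for $n\ge 4d+4$, of lengths $\lfloor n/(d+1)\rfloor$ and $\lfloor n/(d+1)\rfloor-1$), so kind $(a)$ alone already yields facets of two sizes. If $n=2d+2$, then $G$ is the cocktail-party graph (the complement of a perfect matching); $d+1$ is the unique non-neighbour of $0$, so $\{d+1\}$ is a facet of ${\rm Ind}(G\setminus\{0\})$, while $\{1,d+2\}$ is a facet of size $2$. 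If $n=4d+3$, then $G$ is well-covered with $\alpha(G)=3$ and $n-2d-2=2d+1\in[d+1,2d+1]$, so a facet of kind $(b)$ exists; concretely $\{d+1,3d+2\}$ is a facet of ${\rm Ind}(G\setminus\{0\})$ of size $2$ --- inside $G$ the only vertex at circular distance more than $d$ from both $d+1$ and $3d+2$ is $0$ --- while $\{1,d+2,2d+3\}$ is a facet of size $3$. In every case ${\rm Ind}(G\setminus\{0\})$ is not pure, so $G$ is not $1$-well-covered, completing the contrapositive.

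The part I expect to require the most care is ``$\Rightarrow$'' in the case $n=4d+3$: here $G$ is genuinely well-covered, so the failure of $1$-well-coveredness is invisible at the level of $G$ and surfaces only after a deletion, and one must argue carefully --- against the wrap-around edges of the circulant, which allow a vertex to be adjacent to a distant vertex --- that the witness $\{d+1,3d+2\}$, and a facet of kind $(b)$ in general, really is maximal in $G\setminus\{0\}$. The auxiliary classification of which $C_n(1,\ldots,d)$ are well-covered that is used above is routine gap counting, but it needs to be supplied or cited.
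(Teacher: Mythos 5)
Your proposal is correct and follows essentially the same route as the paper: the ``$\Leftarrow$'' direction via vertex decomposability from \cite[Theorem 3.4]{VVW} together with Theorem \ref{vdwellcovered}, and the ``$\Rightarrow$'' direction by exhibiting the same witnesses ($\{d+1\}$ versus $\{1,d+2\}$ when $n=2d+2$, and $\{d+1,3d+2\}$ versus a size-$3$ independent set when $n=4d+3$) showing $G\setminus\{0\}$ is not well-covered. The only difference is that you re-derive the Brown--Hoshino well-coveredness classification and the maximality of the witnesses via a gap-sequence analysis, where the paper simply cites \cite[Theorem 4.1]{Brown11} and checks adjacencies directly; both are fine.
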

\begin{proof}
By \cite[Theorem 4.1]{Brown11}, $G$ is well-covered if and only if $n\leq 3d+2$ or $n =4d+3$.
Since a $1$-well-covered graph must also be well-covered, we only need to look at the
cases $n \leq 3d+2$ or $n=4d+3$.
If $n \leq 3d+2$ and $n \neq 2d+2$, then $G$ is vertex decomposable by
\cite[Theorem 3.4]{VVW}, so by
Theorem \ref{vdwellcovered}, $G$ is $1$-well-covered.  

It suffices to show if $n=2d+2$ or $n=4d+3$, $G$ is not $1$-well-covered.    
If $n =2d+2$, consider the graph $G\setminus \{0\}$.  Then the vertex $d+1$ is adjacent
to every other vertex, so $\{d+1\}$ is a maximal independent set.  However, $\{1,d+2\}$
is also an independent set, so $G \setminus \{0\}$ is not well-covered.
When $n=4d+3$,  we again consider the graph $G \setminus \{0\}$
which is a graph on the vertices $\{1,2,\ldots,4d+2\}$.  The set
$\{d+1,3d+2\}$ is a maximal independent set in this graph.  To
see this, note that $d+1$ is adjacent to $\{1,\ldots,d,d+2,\ldots,2d+1\}$,
and $3d+2$ is adjacent to $\{2d+2,\ldots,3d-1,3d+1,\dots 4d+2\}$.  On the 
other hand, $\{1,d+2,3d+3\}$ is an independent set of size $3$, so $G \setminus \{0\}$
is not well-covered.
\end{proof}

\begin{example}
When we computed our table of well-covered circulants on $n \leq 16$ vertices,
we also checked which of these circulants were $1$-well-covered (see Table
\ref{table}).
In particular, by Table \ref{table}, the converse of Theorem \ref{vdwellcovered} is false. 
The graph $C_{10}(1,2,3,5)$ is Buchsbaum but not vertex decomposable.  However,
it is still $1$-well-covered.  Furthermore, this is the minimal such example with respect
to the number of vertices.  Note that the fact that $C_{10}(1,2,3,5)$ is Buchsbaum but not
vertex decomposable can also be deduced from 
Theorem \ref{theorem62} since ${\rm gcd}(4,10) \neq 1$.
\end{example}

We pose the following question based upon on our computations:

\begin{question}
Can the hypotheses of Theorem \ref{vdwellcovered} be relaxed, i.e., does
the conclusion still hold if $G$ is shellable or Cohen-Macaulay?
\end{question}

One of the difficulties in answering this question is that in most
cases in which we find a shellable or Cohen-Macaulay circulant graph, it
is also vertex decomposable.  Our new example of a graph that is
shellable but not vertex decomposable, i.e., the graph $C_{16}(1,4,8)$, 
is also $1$-well-covered.


\subsection{Circulant Cubic Graphs}

A {\it cubic} graph is a graph such that every vertex has degree three. Brown 
and Hoshino \cite[Theorem 4.3]{Brown11} determined which connected cubic 
circulant graphs were well-covered.  The last two authors and Watt refined 
this result to determine which of these graphs were Cohen-Macaulay (see
\cite[Theorem 5.2]{VVW}), while Hoshino \cite[Proposition 4.61]{HPhD}
determined which ones were shellable.  As a consequence of our computations, we observed 
that the Cohen-Macaulay connected cubic circulants were also vertex 
decomposable.  More precisely, we have the following result, which 
summarizes the past theorems and our computations.

\begin{theorem}
Let $G$ be a connected cubic circulant graph.  Then $G$ is well-covered if and only it is
isomorphic to $C_4(1,2)$, $C_6(1,3)$, $C_6(2,3)$, $C_8(1,4)$, or $C_{10}(2,5)$.  In 
addition
\begin{enumerate}
\item[$(i)$] $C_4(1,2)$ and $C_6(2,3)$ are vertex decomposable.
\item[$(ii)$] $C_6(1,3)$,$C_8(1,4)$, $C_{10}(2,5)$ are Buchsbaum but not 
Cohen-Macaulay.
\end{enumerate}
\end{theorem}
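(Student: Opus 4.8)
<br>

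The plan is to reduce everything to the classification already in the literature together with Theorem \ref{facts} and Lemma \ref{circulantlemma}. That a connected cubic circulant is well-covered precisely when it is isomorphic to one of the five listed graphs is \cite[Theorem 4.3]{Brown11}, and that $C_4(1,2), C_6(2,3)$ are Cohen-Macaulay while $C_6(1,3), C_8(1,4), C_{10}(2,5)$ are not is \cite[Theorem 5.2]{VVW} (the failure of Cohen-Macaulayness will also follow from the $h$-vector computations below). So two things remain: promoting the two Cohen-Macaulay graphs to vertex decomposable, and showing the other three, though not Cohen-Macaulay, are still Buchsbaum.

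For part $(i)$, since $C_4(1,2) = K_4$ the complex ${\rm Ind}(C_4(1,2))$ is $0$-dimensional and hence vertex decomposable by Theorem \ref{facts} $(ii)$. For $C_6(2,3)$, its complement is $C_6(1) = C_6$, so the independent sets of $C_6(2,3)$ are exactly the cliques of the $6$-cycle, namely its vertices and edges; thus ${\rm Ind}(C_6(2,3))$ is a $6$-cycle regarded as a $1$-dimensional complex, which is connected and therefore vertex decomposable by Theorem \ref{facts} $(iii)$.

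For part $(ii)$, I would in each case determine the facets of ${\rm Ind}(G)$ containing the vertex $0$ and apply Lemma \ref{circulantlemma}. The graph $C_6(1,3)$ is the one-paired circulant $C(6;1,2)$, so it is Buchsbaum but not Cohen-Macaulay immediately from Theorem \ref{one-pairedspecial} (with $m = 3$); directly, ${\rm Ind}(C_6(1,3)) = \langle \{0,2,4\}, \{1,3,5\} \rangle$ is disconnected of dimension $2$ (hence not Cohen-Macaulay by Theorem \ref{facts} $(iv)$), while ${\rm link}_{{\rm Ind}(G)}(0) = \langle \{2,4\} \rangle$ is a simplex, so Lemma \ref{circulantlemma} $(i)$ gives Buchsbaum. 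For $C_8(1,4)$ one has $\alpha(G) = 3$, so $\dim {\rm Ind}(G) = 2$; the facets through $0$ are $\{0,2,5\}, \{0,3,5\}, \{0,3,6\}$, so ${\rm link}_{{\rm Ind}(G)}(0) = \langle \{2,5\}, \{3,5\}, \{3,6\} \rangle$ is a path, hence connected, and $G$ is Buchsbaum by Lemma \ref{circulantlemma} $(iii)$; moreover $f({\rm Ind}(G)) = (1,8,16,8)$ gives, via \eqref{hvector}, the $h$-vector $(1,5,3,-1)$, whose negative entry shows $G$ is not Cohen-Macaulay by Theorem \ref{facts} $(v)$.

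The main obstacle is $C_{10}(2,5)$, because $\alpha(G) = 4$ forces $\dim {\rm Ind}(G) = 3$, so the numerical shortcuts of Lemma \ref{circulantlemma} $(ii)$--$(iii)$ are unavailable and one must use part $(i)$ there. I would compute that the facets through $0$ are $\{0,1,4,7\}, \{0,3,4,7\}, \{0,3,6,7\}, \{0,3,6,9\}$, so that ${\rm link}_{{\rm Ind}(G)}(0) = \langle \{1,4,7\}, \{3,4,7\}, \{3,6,7\}, \{3,6,9\} \rangle$ is a $2$-dimensional complex in which this listed order of the facets is a shelling (consecutive facets meet along the single edges $\{4,7\}$, $\{3,7\}$, $\{3,6\}$); hence the link is shellable, so Cohen-Macaulay, and $G$ is Buchsbaum by Lemma \ref{circulantlemma} $(i)$. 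Finally $f({\rm Ind}(G)) = (1,10,30,30,10)$ yields $h({\rm Ind}(G)) = (1,6,6,-4,1)$ by \eqref{hvector}, whose negative entry shows $G$ is not Cohen-Macaulay by Theorem \ref{facts} $(v)$. The delicate point in this last step is the correct enumeration of the maximal independent sets of $C_{10}(2,5)$ that meet a fixed vertex, and the verification that the resulting $2$-dimensional link is shellable.
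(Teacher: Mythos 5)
Your proof is correct, and every computation I checked (the facet lists through the vertex $0$, the $f$-vectors $(1,8,16,8)$ and $(1,10,30,30,10)$, the resulting $h$-vectors $(1,5,3,-1)$ and $(1,6,6,-4,1)$, and the shelling of the link in the $C_{10}(2,5)$ case) is accurate. It is, however, a genuinely different route from the paper's: the paper offers no proof of this theorem at all, presenting it as a summary of the well-coveredness classification of Brown--Hoshino, the Cohen--Macaulay determination in \cite[Theorem 5.2]{VVW}, the shellability results of Hoshino, and the authors' own computer search (which is what supplies the vertex-decomposability claims in part $(i)$). You instead give a self-contained hand verification: part $(i)$ via the dimension-$0$ and connected dimension-$1$ criteria of Theorem \ref{facts}, and part $(ii)$ by exhibiting the facets containing $0$, showing the link is Cohen--Macaulay (by Lemma \ref{circulantlemma} in the low-dimensional cases and by an explicit shelling for $C_{10}(2,5)$, where the dimension-$3$ complex forces you to use part $(i)$ of that lemma rather than the numerical shortcuts), and killing Cohen--Macaulayness with negative $h$-vector entries or disconnectedness. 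The identification $C_6(1,3)=C(6;1,2)$ tying that case to Theorem \ref{one-pairedspecial} is a nice touch the paper does not make. What your approach buys is a machine-free, checkable argument for all five graphs; what the paper's approach buys is brevity and consistency with the computational methodology used for the rest of Table \ref{table}.
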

  

\subsection{Well-covered circulants of small order}

By a computer search,
we have determined all well-covered circulant graphs on $3 \leq n \leq 16$ vertices.
For each well-covered graph, we determined if it was vertex decomposable,
shellable, Cohen-Macaulay, and/or Buchsbaum.  
We also determined if the circulant was connected
or not, and whether or not it was $1$-well-covered. 
Our computations were made using Sage \cite{Sage} and {\it Macaulay2} \cite{Mt}.  The
{\it Macaulay2} packages {\tt EdgeIdeals} \cite{FHVT} and {\tt SimplicialDecomposability}
\cite{Cook} were also used to carry out our experiments.

Table \ref{table} contains the following information.  Every circulant $G = C_n(a_1,\ldots,a_t)$
in the table is well-covered.  
If there is a circulant $C_n(b_1,\ldots,b_t) \cong 
C_n(a_1,\ldots,a_t)$, we only list one circulant.
A $*$ is used to indicated 
that the graph is disconnected.  
Because we have the implications,
\[\mbox{vertex decomposable} \Rightarrow \mbox{shellable} \Rightarrow
\mbox{Cohen-Macaulay} \Rightarrow \mbox{Buchsbaum} \]
it is enough to know the strongest structure $C_n(a_1,\ldots,a_t)$ posses.
We therefore write $V$ if $G$ is vertex decomposable, $S$ if $G$ is shellable but not vertex decomposable, 
$B$ if $G$ is Buchsbaum
but not Cohen-Macaulay, and $N$ if $G$ has none of these properties.  Finally, if
$G$ is $1$-well-covered, we denote this in a separate column by a $1$.

\newpage

\begin{center}
\begin{tiny}
\captionof{table}{Well-Covered circulant graphs up to order 16. 
}\label{table}
\begin{tabular}{||l| c c|| l| c c|| l| c c|| l| c c|}
\hline
\hline
$C_{3}(1)$ & V & 1 & $C_{11}(1, 2, 3, 4, 5)$ & V & 1 & $C_{14}(2, 4, 6)*$ & V & 1 & $C_{16}(2, 4)*$ & V & 1\\
\cline{0-5}
$C_{4}(1)$ & B &  & $C_{12}(3)*$ & N &  & $C_{14}(2, 4, 7)$ & B &  & $C_{16}(2, 6)*$ & N & \\
$C_{4}(2)*$ & V & 1 & $C_{12}(4)*$ & V & 1 & $C_{14}(1, 2, 3, 4)$ & V & 1 & $C_{16}(2, 8)*$ & N & \\
$C_{4}(1, 2)$ & V & 1 & $C_{12}(6)*$ & V & 1 & $C_{14}(1, 2, 3, 7)$ & B &  & $C_{16}(4, 8)*$ & V & 1\\
\cline{0-2}
$C_{5}(1)$ & V & 1 & $C_{12}(1, 4)$ & N &  & $C_{14}(1, 2, 4, 6)$ & V & 1 & $C_{16}(1, 2, 4)$ & B & \\
$C_{5}(1, 2)$ & V & 1 & $C_{12}(2, 4)*$ & N &  & $C_{14}(1, 2, 4, 7)$ & B &  & $C_{16}(1, 2, 6)$ & B & 1\\
\cline{0-2}
$C_{6}(2)*$ & V & 1 & $C_{12}(2, 6)*$ & N &  & $C_{14}(1, 2, 5, 6)$ & N &  & $C_{16}(1, 4, 6)$ & V & 1\\
$C_{6}(3)*$ & V & 1 & $C_{12}(3, 4)$ & B &  & $C_{14}(1, 2, 5, 7)$ & B & 1 & $C_{16}(1, 4, 7)$ & N & \\
$C_{6}(1, 2)$ & B &  & $C_{12}(3, 6)*$ & V & 1 & $C_{14}(1, 3, 5, 7)$ & B &  & $C_{16}(1, 4, 8)$ & S & 1\\
$C_{6}(1, 3)$ & B &  & $C_{12}(4, 6)*$ & V & 1 & $C_{14}(1, 4, 6, 7)$ & B & 1 & $C_{16}(1, 6, 8)$ & B & \\
$C_{6}(2, 3)$ & V & 1 & $C_{12}(1, 2, 6)$ & B & 1 & $C_{14}(2, 4, 6, 7)$ & V & 1 & $C_{16}(2, 4, 6)*$ & N & \\
$C_{6}(1, 2, 3)$ & V & 1 & $C_{12}(1, 3, 5)$ & B &  & $C_{14}(1, 2, 3, 4, 5)$ & V & 1 & $C_{16}(2, 4, 8)*$ & V & 1\\
\cline{0-2}
$C_{7}(1)$ & B &  & $C_{12}(1, 3, 6)$ & V & 1 & $C_{14}(1, 2, 3, 4, 6)$ & V & 1 & $C_{16}(2, 6, 8)*$ & N & 1\\
$C_{7}(1, 2)$ & V & 1 & $C_{12}(1, 4, 6)$ & B &  & $C_{14}(1, 2, 3, 4, 7)$ & V & 1 & $C_{16}(1, 2, 3, 8)$ & B & 1\\
$C_{7}(1, 2, 3)$ & V & 1 & $C_{12}(2, 3, 4)$ & N &  & $C_{14}(1, 2, 3, 5, 7)$ & B &  & $C_{16}(1, 2, 4, 7)$ & B & \\
\cline{0-2}
$C_{8}(2)*$ & N &  & $C_{12}(2, 3, 6)$ & B & 1 & $C_{14}(1, 2, 3, 6, 7)$ & B &  & $C_{16}(1, 2, 5, 8)$ & B & \\
$C_{8}(4)*$ & V & 1 & $C_{12}(2, 4, 6)*$ & V & 1 & $C_{14}(1, 2, 4, 6, 7)$ & V & 1 & $C_{16}(1, 2, 6, 7)$ & N & \\
$C_{8}(1, 2)$ & V & 1 & $C_{12}(3, 4, 6)$ & B & 1 & $C_{14}(1, 2, 5, 6, 7)$ & V & 1 & $C_{16}(1, 2, 6, 8)$ & N & \\
$C_{8}(1, 3)$ & B &  & $C_{12}(1, 2, 3, 4)$ & V & 1 & $C_{14}(1, 2, 3, 4, 5, 6)$ & B &  & $C_{16}(1, 2, 7, 8)$ & B & \\
$C_{8}(1, 4)$ & B &  & $C_{12}(1, 2, 4, 5)$ & B &  & $C_{14}(1, 2, 3, 4, 5, 7)$ & B & 1 & $C_{16}(1, 3, 5, 7)$ & B & \\
$C_{8}(2, 4)*$ & V & 1 & $C_{12}(1, 2, 4, 6)$ & V & 1 & $C_{14}(1, 2, 3, 4, 6, 7)$ & V & 1 & $C_{16}(1, 4, 6, 8)$ & B & \\
$C_{8}(1, 2, 3)$ & B &  & $C_{12}(1, 3, 4, 5)$ & B & 1 & $C_{14}(1, 2, 3, 4, 5, 6, 7)$ & V & 1 & $C_{16}(1, 4, 7, 8)$ & B & 1\\
\cline{7-9}
$C_{8}(1, 2, 4)$ & V & 1 & $C_{12}(1, 3, 4, 6)$ & B &  & $C_{15}(3)*$ & V & 1 & $C_{16}(2, 4, 6, 8)*$ & V & 1\\
$C_{8}(1, 3, 4)$ & B & 1 & $C_{12}(1, 3, 5, 6)$ & B & 1 & $C_{15}(5)*$ & V & 1 & $C_{16}(1, 2, 3, 4, 5)$ & V & 1\\
$C_{8}(1, 2, 3, 4)$ & V & 1 & $C_{12}(1, 4, 5, 6)$ & V & 1 & $C_{15}(1, 5)$ & N &  & $C_{16}(1, 2, 3, 4, 6)$ & V & 1\\
\cline{0-2}
$C_{9}(3)*$ & V & 1 & $C_{12}(2, 3, 4, 6)$ & V & 1 & $C_{15}(3, 5)$ & N &  & $C_{16}(1, 2, 3, 6, 8)$ & B & 1\\
$C_{9}(1, 3)$ & B &  & $C_{12}(1, 2, 3, 4, 5)$ & B &  & $C_{15}(3, 6)*$ & V & 1 & $C_{16}(1, 2, 3, 7, 8)$ & B & \\
$C_{9}(1, 2, 3)$ & V & 1 & $C_{12}(1, 2, 3, 4, 6)$ & V & 1 & $C_{15}(1, 2, 3)$ & B &  & $C_{16}(1, 2, 4, 5, 8)$ & B & \\
$C_{9}(1, 2, 4)$ & B &  & $C_{12}(1, 2, 3, 5, 6)$ & B &  & $C_{15}(1, 3, 5)$ & B &  & $C_{16}(1, 2, 4, 6, 7)$ & N & \\
$C_{9}(1, 2, 3, 4)$ & V & 1 & $C_{12}(1, 2, 4, 5, 6)$ & B & 1 & $C_{15}(1, 3, 6)$ & V & 1 & $C_{16}(1, 2, 4, 6, 8)$ & V & 1\\
\cline{0-2}
$C_{10}(2)*$ & V & 1 & $C_{12}(1, 3, 4, 5, 6)$ & B & 1 & $C_{15}(1, 4, 6)$ & N &  & $C_{16}(1, 2, 4, 7, 8)$ & B & 1\\
$C_{10}(5)*$ & V & 1 & $C_{12}(1, 2, 3, 4, 5, 6)$ & V & 1 & $C_{15}(3, 5, 6)$ & V & 1 & $C_{16}(1, 2, 6, 7, 8)$ & V & 1\\
\cline{4-6}
$C_{10}(1, 4)$ & N &  & $C_{13}(1, 3)$ & B &  & $C_{15}(1, 2, 3, 6)$ & N &  & $C_{16}(1, 3, 4, 5, 7)$ & N & \\
$C_{10}(2, 4)*$ & V & 1 & $C_{13}(1, 5)$ & V & 1 & $C_{15}(1, 2, 3, 7)$ & B & 1 & $C_{16}(1, 3, 5, 7, 8)$ & B & 1\\
$C_{10}(2, 5)$ & B &  & $C_{13}(1, 2, 4)$ & B &  & $C_{15}(1, 2, 5, 6)$ & B &  & $C_{16}(1, 2, 3, 4, 5, 6)$ & V & 1\\
$C_{10}(1, 2, 3)$ & V & 1 & $C_{13}(1, 2, 5)$ & B &  & $C_{15}(1, 3, 4, 5)$ & B & 1 & $C_{16}(1, 2, 3, 4, 5, 7)$ & B & 1\\
$C_{10}(1, 2, 4)$ & V & 1 & $C_{13}(1, 3, 4)$ & B & 1 & $C_{15}(1, 3, 4, 6)$ & B & 1 & $C_{16}(1, 2, 3, 4, 5, 8)$ & V & 1\\
$C_{10}(1, 2, 5)$ & B &  & $C_{13}(1, 2, 3, 4)$ & V & 1 & $C_{15}(1, 3, 5, 6)$ & B &  & $C_{16}(1, 2, 3, 4, 6, 8)$ & V & 1\\
$C_{10}(1, 3, 5)$ & B &  & $C_{13}(1, 2, 3, 5)$ & V & 1 & $C_{15}(1, 4, 5, 6)$ & V & 1 & $C_{16}(1, 2, 3, 4, 7, 8)$ & B & \\
$C_{10}(1, 4, 5)$ & V & 1 & $C_{13}(1, 2, 3, 6)$ & B &  & $C_{15}(1, 2, 3, 4, 5)$ & V & 1 & $C_{16}(1, 2, 3, 5, 6, 7)$ & B & \\
$C_{10}(2, 4, 5)$ & V & 1 & $C_{13}(1, 2, 3, 4, 5)$ & V & 1 & $C_{15}(1, 2, 3, 5, 6)$ & B &  & $C_{16}(1, 2, 3, 5, 6, 8)$ & V & 1\\
$C_{10}(1, 2, 3, 4)$ & B &  & $C_{13}(1, 2, 3, 4, 5, 6)$ & V & 1 & $C_{15}(1, 2, 3, 5, 7)$ & V & 1 & $C_{16}(1, 2, 3, 5, 7, 8)$ & B & \\
\cline{4-6}
$C_{10}(1, 2, 3, 5)$ & B & 1 & $C_{14}(2)*$ & N &  & $C_{15}(1, 2, 4, 5, 7)$ & B &  & $C_{16}(1, 2, 4, 6, 7, 8)$ & V & 1\\
$C_{10}(1, 2, 4, 5)$ & V & 1 & $C_{14}(7)*$ & V & 1 & $C_{15}(1, 3, 4, 5, 6)$ & V & 1 & $C_{16}(1, 3, 4, 5, 7, 8)$ & B & 1\\
$C_{10}(1, 2, 3, 4, 5)$ & V & 1 & $C_{14}(1, 6)$ & N &  & $C_{15}(1, 2, 3, 4, 5, 6)$ & V & 1 & $C_{16}(1, 2, 3, 4, 5, 6, 7)$ & B & \\
\cline{0-2}
$C_{11}(1, 2)$ & B &  & $C_{14}(2, 4)*$ & V & 1 & $C_{15}(1, 2, 3, 4, 5, 7)$ & B & 1 & $C_{16}(1, 2, 3, 4, 5, 6, 8)$ & V & 1\\
$C_{11}(1, 3)$ & B &  & $C_{14}(1, 2, 5)$ & B &  & $C_{15}(1, 2, 3, 4, 6, 7)$ & B &  & $C_{16}(1, 2, 3, 4, 5, 7, 8)$ & B & 1\\
$C_{11}(1, 2, 3)$ & V & 1 & $C_{14}(1, 4, 6)$ & B &  & $C_{15}(1, 2, 3, 4, 5, 6, 7)$ & V & 1 & $C_{16}(1, 2, 3, 5, 6, 7, 8)$ & B & 1\\
\cline{7-9}
$C_{11}(1, 2, 4)$ & B &  & $C_{14}(1, 4, 7)$ & B &  & $C_{16}(4)*$ & N &  & $C_{16}(1, 2, 3, 4, 5, 6, 7, 8)$ & V & 1\\
$C_{11}(1, 2, 3, 4)$ & V & 1 & $C_{14}(1, 6, 7)$ & B & 1 & $C_{16}(8)*$ & V & 1 & & &\\
\hline
\hline 
\end{tabular}
\end{tiny}
\end{center}



\end{document}